\theoremstyle{plain}
\newtheorem{theorem}{Theorem}[section]
\theoremstyle{definition}
\newtheorem{example}{Example}[section]
\newtheorem{remark}{Remark}[section]
\newcommand{\e}{\varepsilon}
\renewcommand{\div}{\text{div}}
\renewcommand{\O}{\Omega}
\newcommand{\R}{{\mathbb R}}
\def\be{\begin{equation}}
\def\ee{\end{equation}}
\def\bes{\begin{equation*}}
\def\ees{\end{equation*}}
\def\bc{\begin{cases}}
\def\ec{\end{cases}}
\numberwithin{equation}{section}
\begin{document}

\title[Fluids, Geometry, and Navier-Stokes Turbulence]
{Fluids, Geometry, and the Onset of Navier-Stokes Turbulence in Three Space Dimensions}

\author{Gui-Qiang Chen}
\address{Mathematical Institute, University of Oxford, Oxford OX2 6GG, UK;
AMSS \& UCAS, Chinese Academy of
Sciences, Beijing 100190, China.}
\email{chengq@maths.ox.ac.uk}

\author{Marshall Slemrod}
\address{Department of Mathematics, University of Wisconsin, Madison, WI 53706, USA.}
\email{slemrod@math.wisc.edu}

\author{Dehua Wang}
\address{Department of Mathematics, University of Pittsburgh, Pittsburgh, PA 15260, USA.}
\email{dwang@math.pitt.edu}

\dedicatory{\it Dedicated to our friend Edriss Titi on the occasion of his 60th birthday}

\keywords{Incompressible Euler equations, compressible Euler equations,
isometric immersion problem, Gauss curvature,
first and second fundamental forms, Riemann curvature tensor,
Navier-Stokes equations, turbulence.}
\subjclass[2010]{Primary: 53C42, 53C21, 53C45, 58J32, 35L65, 35M10;
Secondary: 76F02, 35D30, 35Q31, 35Q35, 35L45, 57R40, 57R42, 76H05, 76N10.}

\date{\today}

\begin{abstract}
A theory
for the evolution of a metric $g$~ driven by
the equations of three-dimensional continuum mechanics is developed.
This metric in turn allows for the local existence of an evolving three-dimensional
Riemannian manifold immersed in the six-dimensional Euclidean space.
The Nash-Kuiper theorem is then applied to this Riemannian manifold
to produce a {\it wild} evolving $C^{1}$ manifold.
The theory is applied to the incompressible Euler and Navier-Stokes equations.
One practical outcome of the theory is a computation of~ critical profile initial data
for what may be interpreted as the onset of turbulence for the classical
incompressible Navier-Stokes equations.
\end{abstract}
\maketitle


\section{Introduction} \label{Introduction}

The purpose of this paper is to continue our previous study of the link
between the equations of inviscid continuum mechanics and the motion of Riemannian manifolds ({\it cf.} \cite{ACSW}).
More specifically, we have shown in \cite{ACSW} that a solution of the system of balance laws of mass and momentum
in two space dimensions can be mapped into an evolving two-dimensional Riemannian manifold in $\mathbb{R}^{3}$.
Furthermore, it is shown that the geometric image of smooth solutions of the continuum equations for non-wild
data (not simple shears) can be shadowed by a non-smooth geometric motion.
In addition,  the geometric initial value problem for these non-smooth solutions
has an infinite number
of energy preserving non-smooth solutions.
Since the earlier paper  \cite{ACSW} was focused on two-dimensional continuum mechanics, it is natural to
develop a theory to deal with three space dimensional case, and we provide this here when the Riemannian manifold
is now time evolving in $\mathbb{R}^{6}$.
Moreover, our earlier paper only dealt with inviscid materials.
In this paper, we extend our results to viscous fluids, including the incompressible viscous fluids
governed by the classical Navier-Stokes equations.
In particular, we use the geometric theory to predict the critical profile initial data
for the onset of turbulence in channel flow.

The main value of such a continuum-geometry link in \cite{ACSW} is to give a rather straightforward
demonstration for the existence of wild solutions for the equations of inviscid continuum mechanics
for classical incompressible and compressible fluids and neo-Hookian elasticity, and for the non-uniqueness
of weak entropy solutions to the initial value problem.
The work was motivated by the important sequence of papers \cite{CDS2012,DS2009,DS2010,DS2012} by DeLellis,
Sz\'{e}kelyhidi Jr.,
and others
on the application of Gromov's $h$-principle and convex integration \cite{Gromov1986} to provide
both the existence of wild solutions and the non-uniqueness of solutions to the initial value problem.
Since Gromov's work is based on
the classical Nash-Kuiper theorem for non-smooth isometric embeddings in Riemannian geometry,
it was our goal to exposit a direct map from continuum mechanics to the motion of a Riemannian manifold,
in order to
avoid sophisticated analysis needed to apply Gromov's theory.
Furthermore, in making the direct continuum to geometry link, it becomes apparent that
our approach is very much in the spirit of the Einstein equations of general relativity,
{\it i.e.},  in both our work and general relativity,
the matter relation (fluids, gases, {\it etc.}) on one side of the equations
drives the geometric motion of an evolving space-time metric.

Perhaps in retrospect, it is no surprise that the proof of the continuum to geometry map
in two space dimensions is distinctly different than the proof we provide here
for three space dimensions.
The reason is more than just technical and lies at the heart of much of the work
for the isometric embedding problem in three and higher dimensional Riemannian manifolds.
More specifically, for two-dimensional manifolds, it is rather straightforward to analyze
the Gauss-Codazzi equations that provide both necessary and sufficient conditions
for the existence of an isometrically embedded manifold in $\mathbb{R}^{3}$,
which was the view we took in \cite{ACSW}.
However, for the case of three space dimensions, all the work to date has avoided the application
of the next level of necessary and sufficient conditions (the Gauss, Codazzi, and Ricci equations)
and has dealt with the fully nonlinear embedding equations directly;
we follow this direct approach here as well.
We do this by invoking two key results: The solvability of the system for determination of
a metric $g$, given the components of the Riemann curvature tensor ({\it cf.} DeTurck-Yang \cite{DeYang}),
and the local solvability of the isometric embedding problem for the embedding of a three-dimensional
Riemannian manifold into $\mathbb{R}^{6}$ ({\it cf.}  Maeda-Nakamura \cite{NakamuraMaeda86, NakamuraMaeda89};
see also Goodman-Yang \cite{GoodmanYang} and Chen-Clelland-Slemrod-Yang-Wang \cite{CCSWY}).

This paper is divided into six sections after this introduction.
Section 2 provides a review of the elements of Riemannian geometry and
the isometric embedding problem.
A short presentation of the balance laws of continuum mechanics is given in Section 3.
In Section 4, a short proof is presented for deriving the metric map taking the continuum mechanical evolution
into the evolution of a three-dimensional Riemannian manifold immersed in the six-dimensional Euclidean space.
In Section 5, we present a short observation that a shearing flow may be mapped into a Riemannian flat manifold
in the six-dimensional Euclidean space.
In Section 6,  the Nash-Kuiper theorem for non-smooth isometric embeddings is first recalled
and is then applied
to show that the geometric initial value problem has an infinite set of weak solutions
for the same non-smooth initial data.
Finally, in Section 7, the theory is applied to the classical incompressible Navier-Stokes equations
and gives a prediction of the critical profile initial data for the onset of turbulence in channel flow.
The critical profile predicted by the geometric theory of this paper
is in agreement with the experimentally observed profile given in Reichardt \cite{Reichardt}.

\bigskip
\section{Geometry and isometric embedding}\label{GeometryEmbedding}

This section is devoted to some preliminary discussion and review about geometry and isometric embedding.

\subsection{Geometry} \label{Geometry}
Riemann \cite{Riemann} in 1854 introduced the notion of an abstract manifold with metric structure,
motivated by the problem of defining a surface in a Euclidean
space independently of the underlying Euclidean space.
The isometric embedding problem seeks to establish the conditions for the Riemannian manifold to be a
sub-manifold of a Euclidean space with the same metric.
For example, consider a smooth $n$-dimensional Riemannian manifold $M^{n}$ with metric tensor $g$.
In terms of local coordinates $x_{i},\;i =1,2 \ldots  n$,
the distance on $M^{n}$ between neighbouring points is
$$
ds^{2} =g_{i j}  dx_{i}  dx_{j}, \qquad i,j =1,2,\ldots  n,
$$
where and
throughout the paper, the standard summation convention is adopted.
Now let $\mathbb{R}^{m}$ be the $m$-dimensional Euclidean space, and let $y :M^{n} \rightarrow \mathbb{R}^{m}$ be
a smooth map so that the distance between neighbouring points is given by
$$
d \bar{s}^{2} = dy\cdot dy =y_{,j}^{i} y_{,k}^{i}  dx_{j}  dx_{k},
$$
where the subscript comma denotes partial differentiation with respect
to the local coordinates $x_{i}, i=1,2,\ldots, n$.
The existence of a \textit{global embedding} of $M^{n}$ in $\mathbb{R}^{m}$ is equivalent to the existence of
a smooth map $y$ for each $x \in M^{n}$ into $\mathbb{R}^{m}$.
\textit{Isometric embedding} requires the existence of a map $y$ for which the distances are equal.
That is,
$$
g_{i j}  dx_{i}  dx_{j} =y_{,j}^{i} y_{,k}^{i}  dx_{j}  dx_{k},
\quad\mbox{or}\quad
y_{,j}^{i} y_{,k}^{i} =g_{j k},
$$
which may be compactly rewritten
as
$$
\partial_{i}y \cdot  \partial_{j}y =g_{i j},
$$
where $\partial_{i} =\frac{\partial}{\partial x_{i}}$,
and the inner product in $\mathbb{R}^{m}$ is denoted by symbol ``$ \cdot $''.

The classical isometric embedding of a $2$-dimensional Riemannian manifold
into the $3$-dimensional Euclidean space is comparatively well studied and
comprehensively discussed in Han-Hong \cite{HanHong}. By contrast,
the embedding of $n$-dimensional Riemannian manifolds into the $J_n:=\frac{n(n +1)}{2}$-dimensional Euclidean space
has only a comparatively small literature.
When $n =3$, the main results are due to Bryant-Griffiths-Yang \cite{BGY}, Nakamura-Maeda \cite{NakamuraMaeda86}, Goodman-Yang \cite{GoodmanYang},
and most recently to Poole \cite{Poole} and Chen-Clelland-Slemrod-Wang-Yang \cite{CCSWY}.
A general, related case when $n \geq 3$ is considered in Han-Khuri  \cite{HanKhuri}.
These results all rely on a linearization of the fully nonlinear system to establish the embedding
$y$ for given metric $g_{i j}$ of the Riemannian manifold.

\subsection{Isometric embedding}\label{Embedding}
Let $(M,g)$ denote an $n$-dimensional Riemannian manifold with ascribed metric tensor $g$.
Suppose that manifold $(M,g)$ can be embedded globally into $\mathbb{R}^{m}$ (the term \textit{immersion} is used when the embedding is local).
As stated in \S \ref{Geometry}, this assumption implies that there exist a system of local coordinates $x=(x_1, x_2,\ldots, x_n)$ on $M$
and an embedding $y=(y_1, y_2, \ldots, y_m)$ such that $\partial_{i}y \cdot  \partial_{j}y =g_{i j}$ hold.

For an $n$-dimensional Riemannian manifold, the components of the corresponding metric tensor may be represented by
the $n \times n$ symmetric matrix
\begin{equation*}\left [\begin{array}{ccc}g_{1 1} & \cdots  & g_{1 n} \\
\vdots  & \ddots  & \vdots  \\
g_{n 1} & \cdots  & g_{n n}\end{array}\right ].
\end{equation*}
There are $J_n=\frac{n (n +1)}{2}$ entries on and above the diagonal,
and we conclude in general that the isometric embedding problem
has three cases: $m >J_n, \ m =J_n$, and $m <J_n$,
where $m$ is the number of unknowns $(y_{1},y_{2},\ldots  y_{m})$,
and $J_n$ is the number of equations.
The crucial number $J_n=\frac{n(n +1)}{2}$ is called the \emph{Janet dimension}.
Let $(M,g)$ be an $n$-dimensional Riemannian manifold with metric $g$,
and let  the $k$th covariant derivative be denoted by $\nabla _{k}$.
This derivative permits differentiation along the manifold.
For scalars $\phi$, vectors $\phi_{i}$ and rank-two
tensors $\phi_{i j}$, the covariant derivatives
are given respectively by
\begin{equation*}
\nabla _{k}\phi  = \partial _{k}\phi, \quad
\nabla _{k}\phi _{j} = \partial _{k}\phi _{j} -\Gamma _{j k}^{l} \phi _{l}, \quad
 \nabla _{k}\phi _{i j} = \partial _{k}\phi _{i j} -\Gamma _{i k}^{l} \phi _{l j} -\Gamma _{j k}^{l} \phi _{i l},
\end{equation*}
where the \emph{Christoffel symbols} are calculated from metric $g$ by the formula:
\begin{equation*}
\Gamma_{ij}^{k} =\frac{1}{2} g^{k l} \big(\partial_{i}g_{j l} + \partial_{j}g_{i l} - \partial_{l}g_{i j}\big).
\end{equation*}
The metric
tensor with components $g^{k l}$ (upper indices) is the inverse of that with
components $g_{ij}$ (lower indices),
so that $g^{k l} g_{p l} =\delta_{l}^{k}$,
where $\delta_{l}^{k}$ is the usual Kronecker delta.
The Kronecker deltas of upper and lower order are defined similarly.

The \emph{Riemann curvature tensor}, $R_{i j k}^{l}$, is defined in terms of
the Christoffel symbols by
\begin{equation*}
R_{i j k}^{l} = \partial_{j}\Gamma_{k i}^{l} - \partial_{k}\Gamma_{j i}^{l}
  +\Gamma_{j p}^{l} \Gamma_{k i}^{p} -\Gamma_{k p}^{l} \Gamma_{j i}^{p}.
\end{equation*}
By lowering
indices, we have the \emph{covariant Riemann curvature tensor}:
\begin{equation*}
R_{i j k l} =g_{i q} R_{j k l}^{q},
\end{equation*}
or
\begin{equation*}
R_{i j k l} =g_{i q} \big(\partial_{k}\Gamma_{l j}^{q} -\partial_{l}\Gamma_{k j}^{q}
   +\Gamma_{k p}^{q} \Gamma_{l j}^{p} -\Gamma_{l p}^{q} \Gamma_{k j}^{p}\big)
\end{equation*}
which~ is written as $Riem(g)$.

This tensor possesses the minor \emph{skew-symmetries}:
\begin{equation*}
R_{i j k l} = -R_{j i k l} = -R_{i j l k},
\end{equation*}
and the \emph{interchange}
(or major) symmetry $R_{i j k l} =R_{k l i j}$.
The cyclic interchange of indices leads to the \emph{first Bianchi identity}:
\begin{equation*}
R_{i j k l} +R_{i k l j} +R_{i l j k} =0,
\end{equation*}
as well as
the \emph{second Bianchi identity}:
\begin{equation*}
\nabla _{s}R_{i j k l} + \nabla _{k}R_{i j l s} + \nabla _{l}R_{i j s k} =0.
\end{equation*}

If we use the Ricci tensor
\begin{equation*}
R_{i k}:=g^{j l} R_{i j k l},
\end{equation*}
then the second Bianchi identity can be written as
\begin{equation*}
B i a n (g):=g^{a b} ( \nabla _{b}R_{a m} -\frac{1}{2}  \nabla _{m}R_{a b}) =0.
\end{equation*}
Of course, with usual raising and lowering of indices and the Ricci identity,
we have
\begin{equation*}
B i a n (g) = \nabla ^{a}R_{a m} -\frac{1}{2}  \nabla _{m}g^{a b} R_{a b}
= \nabla ^{a}(R_{a m} -\frac{1}{2} g_{a m} g^{a b} R_{a b}).
\end{equation*}
The quantity,
$R_{a m} -\frac{1}{2} g_{a m} g^{a b} R_{a b}$,
is the Einstein tensor.
Furthermore, the Ricci tensor is written in terms of metric $g$ by the formula:
\begin{equation*}
R_{i j} = \partial_{p}\Gamma_{i j}^{p} -\partial j \Gamma_{i p}^{p} +\Gamma_{i j}^{q} \Gamma_{p q}^{p}
-\Gamma_{i p}^{q} \Gamma_{j q}^{p}.
\end{equation*}
A \textit{necessary}
condition for the existence of an isometric embedding is that there exist
functions
$$
H_{i j}^{\mu } =H_{j i}^{\mu },\;\kappa _{\mu  i}^{\nu } = -\kappa _{\nu  i}^{\mu }, \qquad \,1 \leq i,j \leq n,\;n +1 \leq \mu,\nu  \leq m,
$$
such that the \textit{Gauss equation} holds:
\begin{equation}
\sum _{\mu  =n +1}^{m}\big(H_{i k}^{\mu } H_{j l}^{\mu } -H_{i l}^{\mu } H_{j k}^{\mu }\big)=R_{i j k l},
\end{equation}
along with the \textit{Codazzi equations}:
\begin{equation}
\partial _{k}H_{i j}^{\mu } +\kappa_{\nu  k}^{\mu } H_{i j}^{\nu } -\Gamma_{k i}^{p} H_{p j}^{\mu }
-\Gamma_{k j}^{p} H_{i p}^{\mu } - \partial_{j}H_{i k}^{\mu } -\kappa_{\nu  j}^{\mu } H_{i k}^{\nu }
+\Gamma_{j i}^{p} H_{p k}^{\mu } +\Gamma_{j k}^{p} H_{i p}^{\mu } =0,
\end{equation}
and the \textit{Ricci equations}:
\begin{equation}
K_{\mu  i j}^{\nu }:=\partial_{i}\kappa_{\mu  j}^{\nu } - \partial_{j}\kappa_{\mu  i}^{\nu }
  +\kappa_{\eta  i}^{\nu } \kappa_{\mu  j}^{\eta } -\kappa_{\eta  j}^{\nu } \kappa_{\mu  i}^{\eta }
  -g^{p q} \big(H_{i p}^{\mu } H_{j q}^{\nu } -H_{j p}^{\mu } H_{i q}^{\nu }\big) =0.
\end{equation}
The Ricci system can be expressed in covariant form by the addition and subtraction
of the term, $\Gamma_{i j}^{q} \kappa_{\mu q}^{\nu}$,
to obtain
\begin{equation*}
\nabla_{i}\kappa_{\mu  j}^{\nu } - \nabla_{j}\kappa_{\mu  i}^{\nu } +\kappa_{\eta  i}^{\nu } \kappa _{\mu  j}^{\eta }
 -\kappa_{\eta  j}^{\nu } \kappa_{\mu  i}^{\eta }
 =g^{p q} \big(H_{i p}^{\mu } H_{j q}^{\nu } -H_{j p}^{\mu } H_{i q}^{\nu }\big).
\end{equation*}

\begin{theorem}[Allendoerfer \cite{Allen}]
Suppose that there exist symmetric functions $H_{i j}^{\mu} =H_{j i}^{\mu}$ and anti-symmetric
functions $\kappa_{\mu  i}^{\nu }= -\kappa _{\nu  i}^{\mu },\ \text{}n +1 \leq \mu ,\nu  \leq m$,
such that the Gauss-Codazzi-Ricci equations are satisfied in a simply connected domain.
Then there exists an isometric embedding of the $n$-dimensional Riemannian manifold
{\rm (}with the second fundamental form $H_{ij}^\mu$ and the normal bundle $\kappa_{\nu i}^\mu${\rm )}
into $\mathbb{R}^{m}$.
\end{theorem}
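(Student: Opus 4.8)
The plan is to construct the embedding $y:M\to\mathbb{R}^m$ as a solution of an overdetermined first-order PDE system (the "moving frame" equations) whose integrability conditions are precisely the Gauss-Codazzi-Ricci equations assumed in the hypothesis, and then to invoke the Frobenius theorem on the simply connected domain to produce a genuine solution. Concretely, I would introduce unknowns consisting of the position vector $y$, an orthonormal frame adapted to the sought submanifold --- tangent vectors $e_1,\dots,e_n$ and normal vectors $e_{n+1},\dots,e_m$ --- assembled into an $m\times m$ matrix-valued function $F=(e_1,\dots,e_m)$. The structure equations one wants $F$ to satisfy are $\partial_i e_j = \Gamma_{ij}^k e_k + H_{ij}^\mu e_\mu$ (Gauss formula for the tangential frame) and $\partial_i e_\mu = -g^{pq}H_{ip}^\mu e_q + \kappa_{\mu i}^\nu e_\nu$ (Weingarten equation), together with $\partial_i y = e_i$, where the $\Gamma_{ij}^k$ are the Christoffel symbols computed from the prescribed metric $g$.

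The key steps, in order, are as follows. First I would verify that this is a closed, linear, first-order Pfaffian system for $F$ of the form $\partial_i F = F\,A_i$ with known coefficient matrices $A_i$ built from $\Gamma, H, \kappa$ and $g$; linearity in $F$ is what makes the Frobenius argument clean. Second, I would write out the Frobenius integrability (compatibility) conditions $\partial_i\partial_j F = \partial_j\partial_i F$, which reduce to the matrix identity $\partial_i A_j - \partial_j A_i + A_j A_i - A_i A_j = 0$; a block computation of these conditions, splitting into tangent-tangent, tangent-normal, and normal-normal blocks, yields exactly the Gauss equation, the Codazzi equations, and the Ricci equations respectively --- this is the heart of the matter and uses the hypotheses directly. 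Third, having integrability, the simple connectedness of the domain lets me integrate the system to get a frame field $F$, unique once an initial value $F(x_0)$ is fixed; I would choose $F(x_0)$ so that the tangential block has Gram matrix $g_{ij}(x_0)$ and the whole frame is "orthonormal" in the appropriate indefinite sense. Fourth, I must check that the Gram-matrix relations $e_i\cdot e_j = g_{ij}$, $e_i\cdot e_\mu = 0$, $e_\mu\cdot e_\nu = \delta_{\mu\nu}$ are propagated by the flow: differentiating $e_a\cdot e_b$ and using the structure equations together with the defining symmetry/antisymmetry of $H$ and $\kappa$ and the Christoffel identity $\partial_k g_{ij} = \Gamma_{ik}^p g_{pj} + \Gamma_{jk}^p g_{ip}$ shows that the Gram matrix satisfies a linear ODE along each coordinate direction with the correct initial data, hence stays equal to the target. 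Finally, since $\partial_i y = e_i$ and the mixed partials $\partial_i e_j$ are symmetric in $i,j$ (as $\Gamma_{ij}^k$ and $H_{ij}^\mu$ are), the form $e_i\,dx^i$ is closed, so on the simply connected domain it integrates to a map $y$ with $\partial_i y\cdot\partial_j y = e_i\cdot e_j = g_{ij}$, which is the desired isometric immersion; its second fundamental form is $H_{ij}^\mu$ and its normal connection is $\kappa_{\nu i}^\mu$ by construction, and on a simply connected domain the immersion is in fact an embedding.

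The main obstacle I anticipate is the block computation in the second step: one must carefully organize the Frobenius conditions so that the tangential components of $\partial_i\partial_j F=\partial_j\partial_i F$ separate into a part that is an identity always true for Christoffel symbols (the definition of the Riemann tensor $R^l_{ijk}$) and a part that must vanish, and then recognize that this forced vanishing is exactly the Gauss equation $\sum_\mu(H^\mu_{ik}H^\mu_{jl}-H^\mu_{il}H^\mu_{jk})=R_{ijkl}$; similarly the normal components must be massaged using the given antisymmetry $\kappa_{\mu i}^\nu=-\kappa_{\nu i}^\mu$ to match the Ricci equations, and the cross terms to match Codazzi. A secondary technical point is handling the raising/lowering of the normal index in the Weingarten equation consistently, since the $\mu$-index range is $n+1\le\mu\le m$ and the normal metric is the identity; keeping this bookkeeping straight, rather than any deep difficulty, is where care is required.
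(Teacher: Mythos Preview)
The paper does not supply its own proof of this theorem; it is stated with attribution to Allendoerfer and then used as background, so there is no argument in the paper to compare against. Your proposal is the standard classical argument---set up the adapted moving frame $(e_1,\dots,e_n,e_{n+1},\dots,e_m)$ as the unknown in a linear Pfaffian system $\partial_i F = F A_i$ whose Frobenius compatibility conditions split block-by-block into the Gauss, Codazzi, and Ricci equations, integrate on the simply connected domain to obtain $F$, check that the Gram relations are propagated, and then integrate the closed form $e_i\,dx^i$ to obtain $y$---and it is correct as an outline for producing an isometric \emph{immersion} with the prescribed second fundamental form and normal connection. This is essentially the argument one finds in Allendoerfer's paper and in standard references.

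One correction to your final sentence: simple connectedness of the domain does \emph{not} force the resulting immersion to be an embedding. Simple connectedness is what allows the Frobenius integration of $F$ and the subsequent integration of the closed $1$-form $e_i\,dx^i$ to be carried out globally and single-valuedly, but it says nothing about injectivity of $y$; self-intersections are not ruled out. The construction yields an isometric immersion, which is all that the Gauss--Codazzi--Ricci data can possibly determine (the data are local and invariant under rigid motions of $\mathbb{R}^m$, so global injectivity cannot be read off from them). The word ``embedding'' in the theorem statement should be read in that looser sense, or else restricted to a sufficiently small neighborhood where the immersion is automatically injective.
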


This theorem shows that the solvability of the Gauss-Codazzi-Ricci equations is both necessary and sufficient for an isometric
immersion.

While the examination~ of the Gauss-Codazzi-Ricci system appears to be an appealing route
to the proof of local isometric embedding,
it is in fact the more direct route by using the embedding equations~  $ \partial _{i}y \cdot  \partial _{j}y =g_{i j}$
that has proven more successful for the $C^{\infty }$embedding problem.
The first such result was given in Bryant-Griffiths-Yang \cite{BGY}, and
more refined results are due to Nakamura-Maeda \cite{NakamuraMaeda86,NakamuraMaeda89}
and Poole \cite{Poole}.
See also Chen-Clelland-Slemrod-Wang-Yang \cite{CCSWY}
for an alternative, simpler proof of the Nakamura-Maeda theorem in \cite{NakamuraMaeda86,NakamuraMaeda89},
which we will use here and state it as follows.

\begin{theorem}[Nakamura-Maeda]\label{Nakamura-Maeda}
Let~$\left (M,g\right )$ be a  $C^{\infty}$ three-dimensional Riemannian manifold,
and let $p\in M$ be a point such that the Riemann curvature tensor {\rm (}as defined by~$g${\rm )} does not vanish.
Then there exists a local $C^{\infty}$ isometric embedding of
a neighborhood~ $U_{0}$ of~$p$ into~$\mathbb{R}^{6}$.
\end{theorem}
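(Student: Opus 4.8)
The plan is to follow the direct route through the embedding equations $\partial_i y\cdot\partial_j y=g_{ij}$ rather than the Gauss--Codazzi--Ricci system, treating them as a \emph{determined} system for the unknown $y=(y_1,\dots,y_6)\colon U_0\to\mathbb{R}^6$, which is natural here since the Janet dimension is $J_3=6$. First I would center geodesic normal coordinates $x=(x_1,x_2,x_3)$ at $p$, so that $g_{ij}(p)=\delta_{ij}$ and $\partial_kg_{ij}(p)=0$, and seek $y$ as a small perturbation $y=\bar y+w$ of an explicit low-degree polynomial map $\bar y$. The map $\bar y$ must be chosen with care: at $p$ one wants symmetric matrices $H^\mu=(H^\mu_{ij})$, $\mu=4,5,6$, solving the Gauss equation $\sum_{\mu}\big(H^\mu_{ik}H^\mu_{jl}-H^\mu_{il}H^\mu_{jk}\big)=R_{ijkl}(p)$ and, among its solutions, one in \emph{general position} (three components linearly independent in the space of symmetric $3\times3$ matrices, and the symbol described below non-degenerate). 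Such a choice exists because $Riem(g)(p)\neq0$: vanishing curvature is precisely the obstruction to the requisite non-degeneracy. One then lets $\bar y$ be a polynomial realizing $g_{ij}$ and these $H^\mu$ to leading order at $p$, so that the residual $g_{ij}-\partial_i\bar y\cdot\partial_j\bar y=O(|x|^2)$ is as small as we like on a small ball (even $O(|x|^N)$ if $\bar y$ matches more jets of $g$).

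Next I would analyze the linearization $DF(\bar y)[v]_{ij}=\partial_iv\cdot\partial_j\bar y+\partial_i\bar y\cdot\partial_jv$. Writing $v=\xi^k\partial_k\bar y+\eta^\mu\bar N_\mu$ in the tangential and normal parts of the moving frame of $\bar y$, and using $\bar N_\mu\cdot\partial_j\bar y=0$ and $\partial_i\bar N_\mu\cdot\partial_j\bar y=-\bar H^\mu_{ij}$, one finds $DF(\bar y)[v]_{ij}=\nabla_i\xi_j+\nabla_j\xi_i-2\eta^\mu\bar H^\mu_{ij}$ modulo terms that are small because $\bar y$ is only an approximate embedding: the tangential part is the Killing (symmetrized covariant derivative) operator on $\xi$, the normal part the purely algebraic contraction of $\eta$ against the second fundamental form. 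Given six prescribed data $f_{ij}$, three of the equations can be solved for $\eta^\mu$ by inverting the $3\times3$ block built from the general-position $\bar H^\mu(p)$; substituting back leaves a determined first-order system for the three functions $\xi^k$ alone, and the point is that its principal part is non-degenerate---symmetrizable into a well-posed evolutionary (symmetric hyperbolic) form, the higher-dimensional counterpart of the Darboux/Monge--Amp\`{e}re equation of the $2$-dimensional theory---precisely when $Riem(g)(p)\neq0$. Standard linear theory then solves $DF(\bar y)[w]=f$ locally with tame estimates.

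With linearized solvability in hand, the nonlinear equation $F(y)=g$, i.e. $DF(\bar y)[w]=g-F(\bar y)-Q(w)$ with $Q$ the quadratic remainder, is solved by an iteration starting from $\bar y$---a Newton or contraction-mapping scheme, with Nash--Moser smoothing inserted if the linear solve loses a derivative through the coefficients $\partial_i\bar y$ of $DF$---exploiting that $g-F(\bar y)$ can be made arbitrarily small by shrinking the domain. The iteration converges on a sufficiently small neighborhood $U_0$ of $p$ to a $C^\infty$ map $y$ with $\partial_iy\cdot\partial_jy=g_{ij}$; its differential is injective near $p$ because it is $C^1$-close to that of $\bar y$, so $y$ is the desired local isometric embedding.

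I expect the principal obstacle to be the second step: verifying that, for a general-position solution of the Gauss equation at $p$, the reduced first-order system for $\xi$ is genuinely symmetrizable, that this fails exactly when $Riem(g)(p)=0$, and that the resulting estimates are uniform enough under shrinking of the domain to drive the iteration to convergence. This is the technical core of the Nakamura--Maeda theorem and of the simplified argument of \cite{CCSWY}, and it is delicate precisely because $m=J_3=6$ is the critical value of the Janet dimension: there is no surplus of normal directions to absorb the overdetermined-looking algebra, so the non-vanishing of the curvature must be leveraged in full.
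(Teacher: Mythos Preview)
The paper does not supply a proof of this theorem: it is stated as a known result, attributed to Nakamura--Maeda \cite{NakamuraMaeda86,NakamuraMaeda89}, with a reference to \cite{CCSWY} for an alternative argument. There is therefore no in-paper proof to compare against; the theorem functions purely as a black box that is invoked later in Theorem~\ref{GeoMotion}(b).

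That said, your outline is broadly faithful to the strategy of the cited references. The reduction you describe---choose an approximate embedding $\bar y$ realizing a general-position solution of the Gauss equation at $p$, linearize $F(y)=\partial_iy\cdot\partial_jy$ about $\bar y$, eliminate the normal components $\eta^\mu$ algebraically, and study the residual first-order system for the tangential components $\xi$---is exactly the skeleton common to \cite{NakamuraMaeda86,NakamuraMaeda89}, \cite{GoodmanYang}, and \cite{CCSWY}. One point worth sharpening: the linearized operator that emerges is not, in general, symmetric hyperbolic in the usual sense. Nakamura--Maeda treat it via microlocal analysis for operators of \emph{real principal type} (hence the reliance on \cite{GoodmanYang}), while \cite{CCSWY} recasts it as a \emph{strongly symmetric positive} system in the sense of Friedrichs; neither of these is quite the same as a symmetric hyperbolic evolution, and the distinction matters for the estimates that feed the Nash--Moser iteration. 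Your identification of the core difficulty---verifying non-degeneracy of the reduced symbol precisely when $Riem(g)(p)\neq 0$, and securing tame estimates stable under shrinking the domain---is accurate, and is indeed where all the work in those papers lies.
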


\subsection{Metric solvability for a prescribed Riemann curvature}

We will need the crucial result~ for the metric solvability for a prescribed Riemann curvature
due to DeTurck-Yang \cite{DeYang}:

\begin{theorem}[DeTurck-Yang \cite{NakamuraMaeda86,NakamuraMaeda89}] \label{T:DeYang}
Let $\widehat{R}$ be a non-degenerate tensor $(\widehat{R_{ijkl}})$ over
a three-dimensional manifold {\rm (}say an open set in the three-dimensional Euclidean space{\rm )}
which satisfies the first Bianchi identity.
Then, for any point on the manifold {\rm (}say a point in the open set{\rm )},
there exists a $C^{\infty }$ Riemannian metric $g$ such that system
\begin{equation}\label{Rg}
\textit{Riem}(g)=\widehat{R}
\end{equation}
 is satisfied in a neighborhood of the point.
 \end{theorem}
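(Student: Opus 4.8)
The plan is to exploit the special structure of three dimensions, where the Weyl tensor vanishes identically, so that prescribing the full Riemann tensor is equivalent to prescribing a symmetric rank-two quantity, and then to invoke the local theory for the prescribed-Ricci (equivalently, prescribed-Einstein) equation, after a standard gauge fixing. First I would record the purely algebraic fact that the pair skew-symmetries together with the first Bianchi identity make $\widehat R$ a genuine algebraic curvature tensor at each point, and that in dimension three the space of such tensors is $6$-dimensional --- the same dimension as that of symmetric $2$-tensors --- because the Weyl part of \emph{any} algebraic curvature tensor in dimension three vanishes. Consequently, for any metric $g$, the identity $\textit{Riem}(g)=\widehat R$ holds if and only if the contracted system
\[
R_{ik}(g)=g^{jl}\widehat R_{ijkl}=:\widehat\rho(g)_{ik}
\]
holds, the reverse implication being exactly the three-dimensional fact that the Riemann tensor is recovered from the Ricci tensor by a Kulkarni--Nomizu combination,
\[
R_{ijkl}=R_{ik}g_{jl}+R_{jl}g_{ik}-R_{il}g_{jk}-R_{jk}g_{il}-\tfrac{R}{2}\big(g_{ik}g_{jl}-g_{il}g_{jk}\big),
\]
(in the sign convention of the excerpt). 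Thus the problem reduces to the quasilinear second-order system $\mathrm{Ric}(g)=\widehat\rho(g)$, whose right-hand side depends on $g$ only algebraically, through $g^{-1}$; equivalently, in terms of the metric-independent symmetric $2$-tensor density $\widehat T^{mn}:=\tilde\epsilon^{\,mij}\tilde\epsilon^{\,nkl}\widehat R_{ijkl}$ built from the Levi-Civita symbol, it is a prescribed-Einstein-density equation with a \emph{fixed} right-hand side. The ``non-degeneracy'' hypothesis on $\widehat R$ is the condition $\det(\widehat T^{mn})\neq0$ (coordinate-independent, since this determinant is multiplied by a nonzero Jacobian power under coordinate changes), which is precisely what guarantees that the prescribed Ricci/Einstein target of the reduced equation is invertible at $p$; note also that the differential (second) Bianchi identity is automatically satisfied once a realizing metric is produced, so it is not an extra hypothesis --- a feature special to three dimensions.

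The operator $g\mapsto\mathrm{Ric}(g)$ has a principal symbol that is degenerate along the diffeomorphism directions, a reflection of the contracted second Bianchi identity $\nabla^{i}(R_{ij}-\tfrac12 Rg_{ij})=0$, so the system above is not elliptic as it stands. Following DeTurck, I would fix the Euclidean background metric $\bar g$ near $p$ and replace $\mathrm{Ric}(g)$ by the modified operator $\mathrm{Ric}(g)+\mathcal{L}_{V(g,\bar g)}g$, where $V(g,\bar g)$ is the usual first-order gauge vector field (equivalently, I would work in $\bar g$-harmonic coordinates). The modified operator is quasilinear elliptic, and since only a first-order term has been added, the right-hand side remains algebraic in $g$.

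Using the invertibility of $\widehat\rho$ at $p$, I would construct an algebraic ``seed'' metric $g_{0}$ whose Ricci tensor matches $\widehat\rho(g_{0})$ to leading order at $p$; then the linearization of the modified operator at $g_{0}$ is an invertible elliptic operator on a sufficiently small ball, and a Newton/implicit-function argument in Hölder (or Sobolev) spaces produces a smooth metric $g$ near $p$ solving the gauge-fixed system --- in the real-analytic category one may instead pose a non-characteristic Cauchy problem and apply the Cauchy--Kovalevskaya theorem, then bootstrap by elliptic regularity. To recover the original equation, one must finally show $V(g,\bar g)\equiv0$: by the contracted second Bianchi identity, $V$ satisfies a linear elliptic equation with vanishing data, forcing $V\equiv0$ near $p$, exactly as in DeTurck's trick for the Ricci flow. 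Combined with the reduction of the first paragraph, this yields a $C^{\infty}$ metric $g$ with $\textit{Riem}(g)=\widehat R$ in a neighborhood of $p$.

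I expect the main obstacle to be the middle steps: breaking the gauge degeneracy of the curvature operator to obtain a determined elliptic system, and verifying that the bare non-degeneracy hypothesis on $\widehat R$ suffices to make the linearized gauge-fixed operator invertible at an explicitly constructible seed metric (together with tracking how the $g$-dependence of the reduced right-hand side enters --- harmlessly, since it is zeroth order and does not affect ellipticity). By contrast, the algebraic reduction (the three-dimensional vanishing of the Weyl tensor) and the removal of the gauge (the DeTurck trick) follow well-established lines.
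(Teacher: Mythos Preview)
The paper does not prove this statement; Theorem~\ref{T:DeYang} is quoted from DeTurck--Yang \cite{DeYang} as an external result and used as a black box (in the proof of Theorem~\ref{GeoMotion}). There is thus no in-paper argument against which to compare your proposal.

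That said, your sketch is essentially the DeTurck--Yang strategy itself. The three-dimensional reduction via the vanishing of the Weyl tensor --- equivalently, the double-dual reformulation that turns $\textit{Riem}(g)=\widehat R$ into a prescribed-Einstein(-density) equation with a \emph{metric-independent} right-hand side --- is exactly the structural point that makes the problem tractable, and your reading of ``non-degeneracy of $\widehat R$'' as invertibility of that double-dual is correct and matches Remark~2.2. The gauge-fixing step to obtain a determined system is also the right idea.

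The one place your outline is looser than the actual argument is the final recovery of the ungauged equation. The sentence ``$V$ satisfies a linear elliptic equation with vanishing data, forcing $V\equiv 0$ near $p$, exactly as in DeTurck's trick for the Ricci flow'' borrows parabolic logic that does not transfer directly to the static setting: for an elliptic equation on a small ball one has no canonical ``vanishing data'' to impose, and unique continuation alone does not give $V\equiv 0$. DeTurck's original prescribed-Ricci argument does not in fact prove the gauge vector vanishes; it shows instead that a solution of the modified system is carried by a suitable diffeomorphism to a solution of the original one, and the construction of that diffeomorphism is where the invertibility of the target enters a second time. Your Cauchy--Kovalevskaya alternative --- posing a non-characteristic Cauchy problem with the gauge condition built into the initial data --- is closer to what \cite{DeYang} actually do and does close cleanly. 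This is a technical refinement rather than a fatal gap; either route can be made rigorous, but the elliptic-uniqueness-for-$V$ step as you have written it does not stand on its own.
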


\begin{remark} We note that the proof of DeTurck-Yang in \cite{NakamuraMaeda86,NakamuraMaeda89}
also shows that metric g satisfies $g_{ij}=\delta_{ij}$ at the point noted
in Theorem \ref{T:DeYang}.
\end{remark}

\begin{remark}
The non-degeneracy of the tensor $\widehat{R}$ is equivalent to the non-singularity
of the matrix (which is also denoted by $\widehat{R}$):
$$\begin{bmatrix}
\widehat{R_{1212}} & \widehat{R_{1223}} &  \widehat{R_{1213}}  \\[1mm]
\widehat{R_{1223}} & \widehat{R_{2323}} & \widehat{R_{1323}} \\[1mm]
\widehat{R_{1213}} & \widehat{R_{1323}} & \widehat{R_{1313}}
\end{bmatrix}.$$
\end{remark}

\smallskip
\section{Continuum mechanics}

We consider the balance laws of mass and momentum in three-dimensional continuum mechanics.
Denote by $T$ the (symmetric) Cauchy stress tensor, and assume that the fields for velocity $u$,
Cauchy stress $T$,  and density $\rho$ are consistent with some specific constitutive equation
for a body and satisfy the balances of mass and linear momentum (satisfaction
of balance of angular momentum is automatic).
The equations for the balance of linear momentum in the spatial representation and balance
of mass are:
\begin{equation*}
\begin{split}
&\partial_{1}(\rho  u_{1}^{2} -T_{1 1}) + \partial_{2}(\rho  u_{1} u_{2} -T_{1 2})
  + \partial_{3}(\rho  u_{1} u_{3} -T_{1 3}) = - \partial_{t}(\rho  u_{1}),\\
&\partial_1(\rho  u_{2} u_{1} -T_{2 1}) + \partial_{2}(\rho  u_{2}^{2} -T_{2 2})
  + \partial_{3}(\rho  u_{2} u_{3} -T_{2 3}) = - \partial_{t}(\rho  u_{2}),\\
&\partial_1(\rho u_3u_1-T_{31})+\partial_2(\rho  u_{3} u_{2} -T_{3 2})
  + \partial _{3}(\rho  u_{3}^{2} -T_{3 3}) = - \partial_{t}(\rho  u_{3}),\\
&\partial_{1}(\rho  u_{1}) + \partial _{2}(\rho  u_{2}) + \partial_{3}(\rho  u_{3})
  = - \partial _{t}\rho,\\
&\rho  =\rho_{0} (\det F)^{ -1},
\end{split}
\end{equation*}
where $\rho_{0}$ is the density of the body in the reference configuration,
and $F$ is the deformation gradient of the current configuration
with respect to this reference.

\begin{example}
The following are some examples of the Cauchy stress tensor $T$:
\begin{enumerate}
\item Inviscid compressible fluid: $T_{i j} = -p (\rho ) \delta _{i j}$ (compressible fluid, the Euler
equations).

\smallskip
\item Inviscid incompressible fluid with constant (unit) density: $\rho  =1, T_{i j} = -p \delta _{i j}$,
which imply $\div\, u =0$, $\Delta p =-\div(\div (u \otimes u))$ (incompressible fluid, the Euler equations).

\smallskip
\item Viscous incompressible fluid with
constant (unit) density $\rho  =1$,
$T_{i j} = -p \delta_{ij} +2 \gamma  D_{ij}$, $ D_{i j} =\frac{1}{2}\big(\partial_{i}u_{j} + \partial_{j}u_{i}\big)$, $\gamma>0$,~ $\div\, u =0$,
~ which imply $ \Delta p =-\div(\div (u \otimes u))$ (the Navier-Stokes equations).

\smallskip
\item Neo-Hookean elasticity: $T =\rho  F F^\top$.
\end{enumerate}
\end{example}

\section{A map of continuum motion into geometric motion}

Given a local $C^{\infty }$ (space-time) solution of the continuum balance laws of mass and momentum,
we define the following quantities $\widehat{R_{ijkl}}$ via the first Bianchi identity and the relations:
\begin{eqnarray*}
&\widehat{R_{2323}} =\rho  u_{1}^{2} -T_{1 1},\quad &\widehat{R_{1313}} =\rho  u_{2}^{2} -T_{22},\\[1mm]
&\widehat{R_{1212}} =\rho  u_{3}^{2} -T_{33},\quad &\widehat{R_{3123}} =\rho  u_{1}u_{2} -T_{1 2},\\[1mm]
&\widehat{R_{1223}} =\rho u_1u_3-T_{13}, \quad &\widehat{R_{3112}} =\rho  u_{2}u_{3} -T_{2 3}.
\end{eqnarray*}
Write $\widehat{R}$ as the  $3\times 3$ symmetric matrix:
\begin{equation*}
\begin{split}
\widehat{R}=&
\begin{bmatrix}
\widehat{R_{1212}} & \widehat{R_{1223}} & - \widehat{R_{3112}} \\[1mm]
\widehat{R_{1223}} & \widehat{R_{2323}} & -\widehat{R_{3123}} \\[1mm]
-\widehat{R_{3112}} & -\widehat{R_{3123}} & \widehat{R_{1313}}
\end{bmatrix}\\[2mm]
=&\begin{bmatrix}
\rho  u_{3}^{2} -T_{33} & \rho  u_{1}u_{3} -T_{1 3} &  -\rho  u_{2}u_{3} +T_{2 3} \\[1mm]
\rho  u_{1}u_{3} -T_{1 3} & \rho  u_{1}^{2} -T_{11}  &  -\rho  u_{1}u_{2} +T_{1 2} \\[1mm]
 -\rho  u_{2}u_{3} +T_{2 3} &  -\rho  u_{1}u_{2} +T_{1 2} & \rho  u_{2}^{2} -T_{22}
 \end{bmatrix}.
\end{split}
\end{equation*}
Then system
$$
Riem(g)=\widehat{R}
$$
is a system of six equations in the six unknown components
of metric~$g$.
Furthermore, matrix $\widehat{R}$ is non-singular when $\det \widehat{R}$ is non-zero.
In this case, the  DeTurck-Yang theorem (Theorem \ref{T:DeYang})
yields the local existence of a $C^{\infty }$  metric
$g$ in space.
Moreover, matrix $\widehat{R}$ is positive definite when the quantities
$$\rho  u_{3}^{2} -T_{33}, \quad
\det \begin{bmatrix}
\rho  u_{3}^{2} -T_{33} & \rho  u_{1}u_{3} -T_{13}  \\[1mm]
\rho  u_{1}u_{3} -T_{13}  & \rho  u_{1}^{2} -T_{11}
\end{bmatrix}, \quad
\det \widehat{R}$$
are positive.

\begin{example}
For the Euler equations of either compressible or incompressible flow,
$$
\det \widehat{R}=p^2\big(p +\rho q^{2}\big), \quad  q^{2}=u_1^2+u_2^2+u_3^2.
$$
It is easy to see that $\widehat{R}$ is positive definite when $p$ is positive.
\end{example}

We can then state the following theorem.
\begin{theorem}\label{GeoMotion}
Let $\rho$, $u_{i}$, $T_{ij}$,  $i, j =1, 2, 3$,
be a local $C^{\infty}$ space-time solution to the balance laws of mass and momentum
with  non-zero $\det\widehat{R}$. Then we have the following{\rm :}

\smallskip
{\rm (a)} There is a local space-time Riemannian metric~$g$ in a neighborhood of the origin
that satisfies both system \eqref{Rg} with $g_{ij}=\delta_{ij}$ at the origin
and
the following system{\rm :}
\begin{equation}\label{e61}
\begin{split}
&\partial_{t}(\varrho u_{1})= -\Gamma _{12}^{\lambda }\widehat{R}_{\lambda 323}-\Gamma _{23}^{\lambda }(\widehat{R}_{12\lambda 3}
    +\widehat{R}_{312\lambda})-\Gamma _{13}^{\lambda }\widehat{R}_{2\lambda 23} -\Gamma _{22}^{\lambda }\widehat{R}_{31\lambda 3}
    -\Gamma_{33}^{\lambda }\widehat{R}_{122\lambda}=:A_1,\\
&\partial_{t}(\varrho u_{2})= -\Gamma _{13}^{\lambda }(\widehat{R}_{\lambda 123}+\widehat{R}_{3\lambda 12})
  -\Gamma_{11}^{\lambda }\widehat{R}_{3\lambda 23} -\Gamma _{21}^{\lambda }\widehat{R}_{\lambda 313}
  -\Gamma _{23}^{\lambda }\widehat{R}_{1\lambda 13} -\Gamma _{33}^{\lambda }\widehat{R}_{\lambda 112}=:A_2,\\
&\partial_{t}(\varrho u_{3}) = -\Gamma _{12}^{\lambda }(\widehat{R}_{1\lambda 23}+\widehat{R}_{31\lambda 2})
  -\Gamma_{11}^{\lambda }\widehat{R}_{\lambda 223} -\Gamma_{22}^{\lambda }\widehat{R}_{311\lambda }
  -\Gamma _{31}^{\lambda }\widehat{R}_{\lambda 212} -\Gamma_{32}^{\lambda }\widehat{R}_{121\lambda}=:A_3,
\end{split}
\end{equation}
which is abbreviated  as
$$(\partial_t(\varrho u_1),\partial_t(\varrho u_2),\partial_{t}(\varrho u_{3})) =\left (A_{1},A_{2},A_{3}\right ).$$
Moreover,  the balance of mass and momentum imply
\begin{equation}\label{e62}
\partial _{tt}\varrho  = -\partial _{i}A_{i}.
\end{equation}

\smallskip
{\rm (b)} There is a local $C^{\infty}$ space-time isometric embedding $y$ for the three-dimensional
Riemannian manifold $(M,g)$ into
$\mathbb{R}^{6}$ such that $\partial _{i}y \cdot  \partial _{j}y =g_{i j}$.

\smallskip
{\rm (c)} Conversely,  if there is a smooth metric $g$ and smooth continuum fields $\rho$, $u_{i}$, $T_{ij}$, $i,j =1, 2, 3$,
which  satisfy \eqref{Rg} and \eqref{e61}--\eqref{e62},  then the balance laws of mass and momentum are satisfied.
\end{theorem}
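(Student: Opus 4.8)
The plan is to obtain part (a) from the DeTurck--Yang theorem together with the second Bianchi identity, to deduce part (b) from the Nakamura--Maeda theorem, and to prove part (c) by running the argument for (a) in reverse. For (a), I would first extend the six scalar fields $\widehat{R}_{ijkl}$ listed above to a full rank-four tensor by imposing the curvature symmetries $R_{ijkl}=-R_{jikl}=-R_{ijlk}=R_{klij}$; in three space dimensions the first Bianchi identity is then automatic, since the space of algebraic curvature tensors is already six-dimensional and is parametrised by the symmetric matrix $\widehat{R}$. The hypothesis $\det\widehat{R}\neq0$ is exactly the non-degeneracy required in Theorem \ref{T:DeYang}, so applying that theorem with $t$ frozen yields, for each $t$ near the origin, a $C^{\infty}$ metric $g(t,\cdot)$ near the origin with $\textit{Riem}(g(t,\cdot))=\widehat{R}(t,\cdot)$ and $g_{ij}(t,0)=\delta_{ij}$; joint $C^{\infty}$ dependence of $g$ on $(t,x)$ follows from the smooth dependence of the DeTurck--Yang construction on its data. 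This is exactly \eqref{Rg}.

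Next I would set $S_{ij}:=\varrho u_iu_j-T_{ij}$, so that the balance of momentum reads $\partial_jS_{ij}=-\partial_t(\varrho u_i)$ and the entries of $\widehat{R}$ are precisely the $S_{ij}$. For the metric just constructed the second Bianchi identity $\nabla_sR_{ijkl}+\nabla_kR_{ijls}+\nabla_lR_{ijsk}=0$ holds identically, and in three dimensions it has exactly three independent components. I would take the instances $(i,j;s,k,l)=(2,3;1,2,3)$, $(1,3;2,1,3)$, $(1,2;3,1,2)$, substitute $\textit{Riem}(g)=\widehat{R}$, and use only the curvature symmetries to reduce the four-index terms: the partial-derivative parts then collapse to $\partial_jS_{1j},\partial_jS_{2j},\partial_jS_{3j}$ respectively, while the three collections of Christoffel-symbol terms reproduce exactly $-A_1,-A_2,-A_3$ as displayed in \eqref{e61}. (For example, in the first identity the $\Gamma_{12}^{\lambda}$-terms combine to $-\Gamma_{12}^{\lambda}\widehat{R}_{\lambda323}$ because $R_{23p3}+R_{233p}=0$, the $\Gamma_{23}^{\lambda}$-terms to $-\Gamma_{23}^{\lambda}(\widehat{R}_{12\lambda3}+\widehat{R}_{312\lambda})$ because $R_{23p1}+R_{231p}=0$ and $R_{2p31}=R_{312p}$, and the $\Gamma_{13}^{\lambda},\Gamma_{22}^{\lambda},\Gamma_{33}^{\lambda}$ coefficients simplify in the same way.) Substituting $\partial_jS_{ij}=-\partial_t(\varrho u_i)$ then turns each identity into $\partial_t(\varrho u_i)=A_i$, which is \eqref{e61}. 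Equation \eqref{e62} follows immediately: apply $\partial_i$ to the $i$-th equation of \eqref{e61}, sum, and use the balance of mass $\partial_i(\varrho u_i)=-\partial_t\varrho$ to get $\partial_iA_i=\partial_t\partial_i(\varrho u_i)=-\partial_{tt}\varrho$.

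For (b), for each $t$ the metric $g(t,\cdot)$ is a $C^{\infty}$ three-dimensional Riemannian metric whose Riemann curvature tensor equals the non-degenerate $\widehat{R}(t,\cdot)$ and hence does not vanish, so Theorem \ref{Nakamura-Maeda} supplies a local $C^{\infty}$ isometric embedding $y(t,\cdot)$ of a neighbourhood of the origin into $\mathbb{R}^{6}$ with $\partial_iy\cdot\partial_jy=g_{ij}$; as in the metric construction, smooth dependence of the Nakamura--Maeda construction on its data makes $y$ a $C^{\infty}$ function of $(t,x)$. For (c), assuming $g$ and $\rho,u_i,T_{ij}$ smooth and satisfying \eqref{Rg} and \eqref{e61}--\eqref{e62}, the second Bianchi identity for $g$ combined with the computation above read backwards gives $\partial_jS_{ij}=-A_i$, and comparison with \eqref{e61} yields $\partial_jS_{ij}=-\partial_t(\varrho u_i)$, the balance of momentum; substituting \eqref{e61} into \eqref{e62} gives $\partial_t\big(\partial_t\varrho+\partial_i(\varrho u_i)\big)=0$, so $\partial_t\varrho+\partial_i(\varrho u_i)$ is independent of $t$ and hence vanishes once it vanishes for the initial continuum configuration (equivalently, once $\varrho=\rho_0(\det F)^{-1}$ with $F$ transported by $u$), which is the balance of mass.

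I expect the main obstacle to be the index bookkeeping in the middle step: verifying that the Christoffel parts of the three distinguished Bianchi identities reproduce precisely the right-hand sides $A_1,A_2,A_3$ of \eqref{e61}, and, in the converse, that these three identities together with \eqref{e62} are enough to recover both balance laws. The only other delicate points are the joint smoothness in $(t,x)$ of the metric and of the embedding furnished by Theorems \ref{T:DeYang} and \ref{Nakamura-Maeda}, which relies on those constructions depending smoothly on parameters, and the appeal in part (c) to the initial configuration needed to upgrade $\partial_t\big(\partial_t\varrho+\partial_i(\varrho u_i)\big)=0$ to $\partial_t\varrho+\partial_i(\varrho u_i)=0$.
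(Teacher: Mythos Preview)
Your proposal is correct and follows essentially the same route as the paper: invoke DeTurck--Yang (Theorem~\ref{T:DeYang}) for the metric, combine the second Bianchi identity with the momentum balance to obtain \eqref{e61}--\eqref{e62}, invoke Nakamura--Maeda (Theorem~\ref{Nakamura-Maeda}) for the embedding, and reverse the Bianchi computation for part~(c), recovering mass balance only up to an initial condition. You in fact supply more detail than the paper's proof (the specific Bianchi triples, the automatic first Bianchi identity in dimension three, and the parameter-smoothness of the DeTurck--Yang and Nakamura--Maeda constructions); just be careful with one sign in your bookkeeping, since your statements ``Christoffel parts $=-A_i$'' in~(a) and ``$\partial_jS_{ij}=-A_i$'' in~(c) are not mutually consistent with the substitution you make in between.
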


\begin{proof}
(a) Since $\widehat{R}$ is non-singular,
the DeTurck-Yang theorem (Theorem \ref{T:DeYang})
yields the existence of a metric $g$ satisfying \eqref{Rg}.
Then system \eqref{e61} follows from the second Bianchi identity
and the balance law of momentum for the continuum fields.

\smallskip
(b) The existence of an isometric embedding follows from the Nakamura-Maeda theorem (Theorem \ref{Nakamura-Maeda}).

\smallskip
(c) Apply the second Bianchi identity to $\widehat{R}$.
Then, from \eqref{Rg}, we obtain the system:
\begin{equation}\label{e63}
\begin{split}
&\partial_{1}(\rho  u_{1}^{2} -T_{1 1}) + \partial_{2}(\rho  u_{1} u_{2} -T_{1 2})
  + \partial_{3}(\rho  u_{1} u_{3} -T_{1 3})
=A_{1},\\
&\partial_1(\rho  u_{2} u_{1} -T_{2 1}) + \partial_{2}(\rho  u_{2}^{2} -T_{2 2})
 + \partial_{3}(\rho  u_{2} u_{3} -T_{2 3})
=A_{2},\\
&\partial_{1}(\rho  u_{3} u_{1} -T_{3 1}) + \partial _{2}(\rho u_{3} u_{2} -T_{3 2})
 + \partial_{3}(\rho  u_{3}^{2} -T_{3 3})
=A_{3}.
\end{split}
\end{equation}
From \eqref{e61}, we now recover the balance law of linear momentum for the continuum fields.
Finally, take the divergence of \eqref{e63} and use \eqref{e62} to see
$$
\partial_{t}( \partial_{1}(\rho u_{1}) + \partial_{2}(\rho u_{2}) + \partial_{3}(\rho u_{3}))
= -\partial _{tt}\rho.
$$
Hence, if the balance law of mass is initially satisfied, it is locally satisfied.
\end{proof}

\begin{remark}[Blow-up scenario] We note that, if a metric $g$ satisfies \eqref{Rg},
then substitution of the formula for $\widehat{R}$, which is given in terms of $(\rho, u_i, T_{ij})$,
into \eqref{e61} yields:

\smallskip
(i) a system of nonlinear essentially ordinary differential equations for the incompressible Euler equations;

\smallskip
(ii)  a system of weakly first-order quasilinear partial
differential equations for the incompressible Navier-Stokes equations.

\smallskip
These systems are non-local due to relations  \eqref{Rg} and $\Delta p= -\div (\div( u\otimes u))$.
Nevertheless, these systems provide what may be an avenue for proving finite-time blowup of smooth solutions.
\end{remark}

\section{Shearing motion}\label{shearing}

In \S 4, we have shown that, if $\widehat{R}$ is non-singular, there exists a map from the continuum flow
to the geometric motion.
This motivates the question as to what can be said in the case when $\widehat{R}$~ is singular.
In essence, there are two examples: One for the incompressible Euler equations, and the other for neo-Hookean elasticity
which have been provided in \cite{ACSW}.
For this reason, we will give only a short discussion for the first example, and the second example follows analogously.

For the incompressible Euler equations with $p\ge 0$ (by scaling for any lower bound), the singularity of $\widehat{R}$
means pressure $p =0$,
and hence we consider the steady flow:
$$u_1=u_{1}(x_{2}), \quad p =u_{2}=u_{3} =0.$$
The desired embedding is given by
$$
y_{1} =Bx_{2}, \quad y_{2} =Bx_{1}, \quad y_{3} =f(x_{2}), \quad y_{4} =Bx_{3}, \quad y_{5} =y_{6}=0,
$$
with
$$
\partial_{1}y =(0,B,0,0,0,0), \quad \partial_{2}y =(B,0,f^{\prime},0,0,0), \quad
 \partial _{3}y =(0,0,0,B,0,0),
$$
where $B$ is a positive constant.
This yields metric $g^{ \ast }$ with components
$$
g_{11}^{ \ast}=g^\ast _{33}=B^2, \quad g_{22}^{ \ast } =B^2+f^{ \prime 2}, \quad
g_{12}^{ \ast } =g^\ast_{13} =g_{23}^{ \ast } =0.
$$
An orthonormal set of normal vectors is given by
$$
N_{4}=\frac1{\sqrt{B^2+f^{\prime 2}}}(f^{ \prime },0, -B,0,0,0), \quad
  N_{5}=(0,0,0,0,1,0),\quad
  N_{6}=(0,0,0,0,0,1).
$$
Then a direct calculation by using the definitions of $H_{ij}^{\mu }$
and $\kappa_{\mu j}^{\upsilon }$ gives
$$
H_{22}^{4}= -\frac{f^{ \prime  \prime}}{\sqrt{B^2+f^{ \prime 2}}},
$$
and all the remaining components of the second fundamental forms, as well as
all $\kappa _{\mu j}^{\upsilon }$, to be zero.
The Gauss equations show that the manifold is Riemann flat, and the identification
$$
f^{ \prime }(x_2) =B\arctan(-B\int _{0}^{x_{2}}u_{1}^{2}(s)ds), \qquad
u_{1}^{2}(x_2) = -\frac{f^{ \prime  \prime}(x_2)}{\sqrt{B^2 +f^{\prime 2}(x_2)}}
$$
shows that the non-trivial Euler equation:
$$
0 = \partial _{1}u_{1}^{2}
$$
is identical to the Codazzi equation:
$$
0 = \partial _{1}H_{22}^{4}.
$$
Note the term, $\Gamma_{12}^2H_{22}^4$, in the Codazzi equation vanishes since $\Gamma_{12}^2=0$.

\begin{remark}
We note the following observation: If we had allowed the steady shearing motion
to be the more general case:
$$
u_1=u_{1}(x_{2},x_{3}), \quad  p =u_{2} =u_{3} =0,
$$
we still have an exact solution to the incompressible Euler equations.
However, perhaps surprisingly, we have not been able to find a three-dimensional
manifold that can be identified with this motion.
\end{remark}

\section{The Nash-Kuiper theorem and wild solutions}\label{NK-wild}

We wish to compare the metrics arising from the two cases: $p >0$ and $p =0$, for the incompressible Euler equations.
First consider metric $g$ given by Theorem \ref{GeoMotion} when $p >0$.
Since we know that there is an isometric embedding, we can expand the metric locally
via the Taylor series:
$$
g_{ij} =\delta_{ij}+ {\rm h.o.t}.
$$
For $B>0$ (in the definition of $g^*$) sufficiently large,
we see that $g \leq g^\ast$ in the sense of quadratic forms, or in the language of
the Nash-Kuiper theorem that the embedding $y_{g}$ is \textit{short} compared to the embedding $y_{g^{\ast}}$.

We now recall the well known Nash-Kuiper-Gromov results.

\begin{theorem}[Nash-Kuiper-Gromov]\label{Nash-Kuiper}
Let $\left (M^{n}, \mathfrak{g}\right )$ be a smooth, compact Riemannian manifold~$n \geq 2$.
Assume that $\mathfrak{g}$ is in~$C^{\infty}$. Then the following hold{\rm :}

\smallskip
{\rm (i)} If $m \geq \frac{1}{2}\left (n +2\right )\left (n +3\right )$,
any short embedding~$y_{\mathfrak{g}}$ into $\mathbb{R}^{m}$ can be approximated by isometric embeddings
into $\mathbb{R}^{m}$ of class  $C^{\infty }$ {\rm (Nash \cite{Nash1954} and Gromov \cite{Gromov1986})}{\rm ;}

\smallskip
{\rm (ii)} If~$m \geq n +1$, then any short embedding~$y_{\mathfrak{g}}$ into $\mathbb{R}^{m}$ can be approximated in $C^0$
by embeddings into $\mathbb{R}^{m}$~of
class $C^{1}$ {\rm (Nash \cite{Nash1954} and Kuiper \cite{Kuiper})}.
\end{theorem}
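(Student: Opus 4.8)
The statement collects the classical isometric embedding theorems of Nash, Kuiper, and Gromov, so instead of a new argument I describe how one reconstructs the two proofs; only part~(ii), with $n=3$ and $m=6$, is used in what follows. For part~(ii) (the $C^1$ theorem, $m\ge n+1$), the plan is the method of successive corrugations --- convex integration in its original, Nash form. Start from a short embedding $y_0\colon M^n\to\R^m$ and form the \emph{metric deficit} $h:=\mathfrak{g}-y_0^{\ast}e$, where $e$ is the Euclidean metric on $\R^m$ and $y_0^{\ast}e$ is the pulled-back metric; shortness means $h$ is a smooth, positive-definite symmetric $2$-tensor. First I would decompose $h$, via a partition of unity subordinate to a coordinate atlas, as a locally finite sum of \emph{primitive metrics} $h=\sum_k a_k^2\, d\ell_k\otimes d\ell_k$, with each $\ell_k$ a linear coordinate function from a fixed finite family and each $a_k\ge 0$ smooth --- possible because every positive-definite symmetric matrix lies in the interior of the convex cone generated by squares of linear forms. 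One then removes the primitive metrics one at a time: each ``stage'' replaces the current map $y$ by $y+\mu^{-1}a\,(\textrm{tangential spiral})+\mu^{-1}a\,(\textrm{normal spiral})$, the spirals built from $\cos(\mu\ell)$ and $\sin(\mu\ell)$ at a large frequency $\mu$; the amplitude is of order $a$, so the added metric equals $a^2\,d\ell\otimes d\ell$ up to a $C^0$-small error, while $\mu$ is then taken large enough to make the correction small in $C^0$. A single unused normal direction houses the normal spiral, whence $m\ge n+1$ (Kuiper's improvement of Nash's $m\ge n+2$ uses a more economical codimension-one corrugation).

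The crux is convergence. I would organize the construction into a sequence of ``steps,'' each reducing the remaining deficit by a fixed fraction and consisting of finitely many stages, while tracking simultaneously the $C^0$ distance to the target embedding --- made summable by letting the frequencies grow rapidly from step to step --- and the $C^1$ norm, which stays bounded because a single stage changes $\|Dy\|$ only by an amount controlled by $a$, not by $\mu$, thanks to the near-cancellation in the quadratic term $|D(\textrm{perturbation})|^2$. Passing to the limit gives a $C^1$ map $y_\infty$ with $y_\infty^{\ast}e=\mathfrak{g}$ and $\|y_\infty-y_0\|_{C^0}$ as small as prescribed; compactness of $M^n$ makes the estimates uniform and forces $y_\infty$ to be an embedding once $y_0$ is $C^0$-close to one. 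This is the main obstacle of the theorem: the perturbations are unavoidably high-frequency, so one must exploit precisely the ``one derivative'' that convex integration buys in order to keep $C^1$ control while driving the metric error to zero.

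For part~(i) (the $C^\infty$ theorem, $m\ge\frac12(n+2)(n+3)$), the plan is instead Nash's hard implicit function scheme. Again start from a short embedding; after a preliminary smooth perturbation, arrange that the map is \emph{free}, i.e.\ the $n$ first partials together with the $\frac12 n(n+1)$ second partials are pointwise linearly independent --- this is where the dimension count enters, the hypothesis leaving enough room both for freeness and for the iteration. Then solve $\partial_i y\cdot\partial_j y=g_{ij}$ by a Newton-type iteration: the linearization at a free map is solved pointwise (algebraically) for the perturbation, but the solution loses derivatives, so one interleaves Nash's smoothing operators $S_\theta$ between Newton steps. The resulting Nash--Moser iteration converges because its quadratic gain outpaces the fixed derivative loss, and the limit is $C^\infty$, isometric, and $C^0$-close to $y_0$. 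Here the main obstacle is precisely this loss of derivatives in the linearized operator, resolved by the smoothing-plus-superlinear-convergence mechanism; Gromov's contribution is the $h$-principle framework that subsumes and streamlines both parts.
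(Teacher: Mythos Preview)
Your sketch is a faithful outline of the classical arguments, but there is nothing in the paper to compare it against: the paper does not prove Theorem~\ref{Nash-Kuiper}. It is stated purely as a quotation of the literature, with attributions to Nash~\cite{Nash1954}, Kuiper~\cite{Kuiper}, and Gromov~\cite{Gromov1986}, and the text immediately moves on to applying part~(ii) in the case $n=3$, $m=6$. So the ``paper's own proof'' consists entirely of the citations.

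Given that, your write-up is appropriate in spirit --- you correctly flag that only part~(ii) is used downstream, and your summary of the Nash--Kuiper corrugation scheme (decomposition of the metric deficit into primitive metrics, high-frequency spiral perturbations exhausting the deficit while keeping $C^1$ bounds via the amplitude/frequency decoupling, $C^0$ convergence by rapid frequency growth) and of the Nash--Moser smooth case (freeness, algebraic inversion of the linearization, smoothing to offset derivative loss) are accurate at the level of a proof idea. One minor caution: for part~(i) the dimension bound $\tfrac12(n+2)(n+3)$ in the statement is Gromov's sharpening rather than Nash's original count, so if you ever expand this sketch you should route the attribution through \cite{Gromov1986} for that specific threshold; your explanation of where the count comes from (freeness plus room for the iteration) is the right heuristic.
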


In our examples, we are in the case $n =3$ and $m =6$, so only case (ii) applies.
In fact, the issue is quite subtle. It is quite evident that the shearing motion had its geometric image
in  $\mathbb{R}^{4}$ and, if the general Euler flow had its geometric image in
$\mathbb{R}^{4}$ as well, then we could apply (ii) with $m =n +1$.
This would not only allow a sharper result but more importantly allow us to apply
the theory for $m =n +1$ by Borisov\cite{Borisov6,BorisovY},
at least in the case when the Euler flow is locally analytic, which states that embeddings $y_{\mathfrak{g}}$ of (ii)
cannot be $C^{2}$.
Thus there would be an upper bound on their regularity.
While it seems likely based on the discussion of  \cite{CDS2012}
that some upper bound regularity exists for our case, we know of no such result.

As a direct consequence of Theorem \ref{GeoMotion}, Theorem \ref{Nash-Kuiper},
and the appendix of \cite{ACSW},
we can state the following result.

\begin{theorem}\label{T62}
The short embedding $y_{g}$~ given Theorem {\rm \ref{GeoMotion}} {\rm (}which is the image of the Euler flow with $p>0${\rm )}
can be approximated
in $C^0$ by $C^{1}$ embeddings $y_w$
locally in space-time.
Furthermore, the energy
$$
E(t) =\int_{\Omega }\partial_i y_w \cdot \partial_{i}y_w\, dx =\int _{\Omega }tr(g^{*}) dx
$$
is a constant in $t$. In addition, $y_w$ is continuous in $t$ taking values in $C^1_{\rm loc}$.
\end{theorem}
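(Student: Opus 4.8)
The plan is to chain together the three ingredients assembled so far: Theorem \ref{GeoMotion} produces, from the incompressible Euler flow with $p>0$, a local $C^\infty$ space-time metric $g$ together with a $C^\infty$ isometric embedding $y_g$ of the three-manifold $(M,g)$ into $\mathbb{R}^6$; the explicit shearing construction of \S\ref{shearing} furnishes a reference metric $g^*$ and embedding $y_{g^*}$ with $g^*$ Riemann-flat; and for $B>0$ large enough the Taylor expansion $g_{ij}=\delta_{ij}+{\rm h.o.t.}$ near the origin gives $g\le g^*$ as quadratic forms, so $y_g$ is short relative to $g^*$. First I would fix a (small) space-time cylinder on which this comparison holds uniformly, shrinking the spatial neighborhood $\Omega$ and the time interval as needed so that the constant $B$ works for every $t$. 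On this cylinder $y_g(\cdot,t)$ is, for each fixed $t$, a short embedding into $\mathbb{R}^6$ of the compact manifold $(\overline\Omega,g^*)$ (one may take $\Omega$ with smooth boundary, or pass to a slightly smaller closed ball, so the Nash-Kuiper hypotheses apply).

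Next I would invoke Theorem \ref{Nash-Kuiper}(ii) with $n=3$, $m=6\ge n+1$: for each fixed $t$ the short embedding $y_g(\cdot,t)$ can be approximated in $C^0$ by a $C^1$ isometric embedding $y_w(\cdot,t)$ of $(\Omega,g^*)$ into $\mathbb{R}^6$, i.e. $\partial_i y_w\cdot\partial_j y_w=g^*_{ij}$. The point of using $g^*$ rather than $g$ as the target metric is precisely that $g^*$ is the flat shearing metric whose trace is spatially explicit, which is what makes the energy computation transparent: since $y_w(\cdot,t)$ is isometric for $g^*$,
\[
E(t)=\int_\Omega \partial_i y_w\cdot\partial_i y_w\,dx=\int_\Omega g^{*}_{ii}\,dx=\int_\Omega \bigl(2B^2+(B^2+f'(x_2)^2)\bigr)\,dx,
\]
and $f'$ (hence $g^*$) does not depend on $t$, so $E(t)$ is constant in $t$. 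This reproduces the claimed formula $E(t)=\int_\Omega {\rm tr}(g^*)\,dx$. For the joint continuity statement, I would appeal to the appendix of \cite{ACSW}: the Nash-Kuiper-Gromov scheme, run with the $t$-family $y_g(\cdot,t)$ as short maps depending smoothly (hence continuously into $C^\infty_{\rm loc}$) on $t$, can be carried out with parameter dependence, so that the iteration limit $y_w$ is continuous in $t$ with values in $C^1_{\rm loc}(\Omega;\mathbb{R}^6)$ — the corrugation amplitudes and frequencies at each stage can be chosen to depend continuously on $t$ because they are governed by the $C^0$ defect $g^*-g(\cdot,t)$ and the ambient $C^\infty$ data, both continuous in $t$.

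The main obstacle, and the step I would spend the most care on, is exactly this last point: upgrading the single-time Nash-Kuiper approximation to a $t$-parametrized family that is jointly continuous into $C^1_{\rm loc}$. The classical statement in Theorem \ref{Nash-Kuiper}(ii) is pointwise in $t$ and a priori gives no control on how $y_w(\cdot,t)$ varies with $t$; naively applying it time-slice by time-slice could destroy continuity. The resolution is to observe that each stage of the convex-integration / spiralling-perturbation construction is a continuous operation on its inputs (the current approximate embedding and the metric error), and that the stopping criteria can be taken uniform over a compact $t$-interval because $g^*-g(\cdot,t)>0$ is bounded below uniformly there (this is where the uniform choice of $B$ on the fixed cylinder is used). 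One then quotes the appendix of \cite{ACSW}, where precisely this parametrized version is established, to conclude. The remaining assertions — that $y_w$ approximates $y_g$ in $C^0$, that it is $C^1$ in space, and the constancy of $E$ — are then immediate from the construction and the elementary integral above.
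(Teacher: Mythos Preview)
Your proposal is correct and follows exactly the route the paper intends: the paper does not give a separate proof of Theorem~\ref{T62} but simply records it as a direct consequence of Theorem~\ref{GeoMotion}, Theorem~\ref{Nash-Kuiper}(ii), and the appendix of \cite{ACSW}, and you have accurately spelled out how these three ingredients chain together---the shortness $g\le g^{*}$ from the Taylor expansion, the Nash--Kuiper approximation yielding $\partial_i y_w\cdot\partial_j y_w=g^{*}_{ij}$, the energy identity from the time-independence of $g^{*}$, and the parametrized-in-$t$ continuity from the appendix of \cite{ACSW}.
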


Finally, we have the following non-uniqueness result.

\begin{theorem}\label{T63}
For fixed generalized shear initial data, there are infinite number of solutions to the initial value problem
for the evolution equations{\rm :} $\partial_{i}y \cdot \partial_{j}y =g_{i j}^{\ast}$.
\end{theorem}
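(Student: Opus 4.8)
The plan is to realize the non-uniqueness as a direct consequence of the Nash--Kuiper flexibility (Theorem \ref{Nash-Kuiper}(ii)) together with the fact established in Theorem \ref{T62} that the short embedding $y_g$ arising from the Euler flow with $p>0$ admits a $C^1$ isometric approximation $y_w$ realizing the metric $g^{\ast}$. The key observation is that Nash--Kuiper does not merely produce one $C^1$ isometric embedding: the convex-integration scheme underlying part (ii) has at each stage infinitely many directions in which to add the oscillatory corrugations, and hence, starting from the \emph{same} short map (here $y_g$ at $t=0$, with fixed generalized shear initial data), one obtains an infinite family of pairwise distinct $C^1$ isometric immersions of $(M,g^{\ast})$. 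This is exactly the mechanism invoked in the two-dimensional setting of \cite{ACSW} and in \cite{DS2009,DS2010,DS2012,CDS2012}, and the appendix of \cite{ACSW} carries out the bookkeeping needed to make the family genuinely infinite rather than just nonempty.

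First I would fix the generalized shear initial data and the associated short embedding $y_g|_{t=0}$ as in Theorem \ref{T62}; note $g^{\ast}$ is smooth and, for $B$ large, $y_g$ is strictly short with respect to $g^{\ast}$, so there is strict room for the convex-integration scheme. Second, I would invoke the quantitative form of the Nash--Kuiper iteration: given a strictly short embedding and a target metric, one builds a sequence of stages, each adding a corrugation along a chosen unit direction in the parameter domain and a chosen frequency, and the resulting $C^1$ limit is isometric for $g^{\ast}$. Third, and this is the heart of the non-uniqueness, I would exhibit the branching: at the first stage one may choose, say, the primary corrugation direction from a continuum (or at least a countably infinite set) of admissible choices, and each distinct choice leads, after completing the iteration, to a $C^1$ isometric immersion $y_w^{(k)}$; a short argument — essentially the one in the appendix of \cite{ACSW} — shows these differ in $C^0$ (or already at the level of their first-stage one-jets on a fixed open set), hence are genuinely distinct solutions of $\partial_i y\cdot\partial_j y = g^{\ast}_{ij}$ with the prescribed initial data. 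Finally, I would remark that the space-time dependence is inert here: the metric $g^{\ast}$ is $t$-independent (the shear is steady), so the same construction, carried out continuously in $t$ as in Theorem \ref{T62}, yields infinitely many space-time solutions, all continuous in $t$ with values in $C^1_{\rm loc}$.

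The main obstacle I expect is not the existence half — that is Nash--Kuiper verbatim — but making precise that the family is \emph{infinite} and that its members are \emph{distinct solutions of the stated initial value problem} rather than reparametrizations or rigid motions of one another. Concretely, one must pin down a normalization (e.g. prescribing $y_w$ and its first derivatives at the origin, consistent with $g_{ij}=\delta_{ij}$ there) that kills the trivial Euclidean-motion ambiguity, and then verify that the branching at the first corrugation stage survives this normalization and persists to the $C^1$ limit. Once that is arranged, distinctness follows because two different first-stage corrugations already disagree on an open set at the $C^1$ level, and the subsequent stages — being $C^0$-small perturbations with controlled derivatives — cannot reconcile them. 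This is precisely the point handled by the appendix of \cite{ACSW}, which I would cite and adapt rather than redo in full.
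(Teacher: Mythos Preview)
Your route differs from the paper's, and as written it has a gap concerning the words ``fixed initial data.''

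You locate the non-uniqueness in the \emph{spatial} branching of the Nash--Kuiper iteration: at the first corrugation stage one has infinitely many admissible directions and frequencies, each yielding a distinct $C^1$ isometric immersion $y_w^{(k)}$ realizing $g^\ast$. That is true, but those immersions already differ \emph{at $t=0$}. The initial value problem in the theorem asks for infinitely many space-time solutions of $\partial_i y\cdot\partial_j y=g^\ast_{ij}$ sharing the \emph{same} trace $y|_{t=0}$; your family $\{y_w^{(k)}\}$ has pairwise distinct initial traces, so it does not answer that question. The normalization you propose (pinning down $y$ and its first jet at the spatial origin) removes only the finite-dimensional Euclidean-motion ambiguity and cannot force two different first-stage corrugations to agree on an entire time slice. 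Note also that the appendix of \cite{ACSW} you invoke is what the paper uses for Theorem~\ref{T62} (producing one $y_w$); the non-uniqueness in Theorem~\ref{T63} is modeled on Theorem~7.3 of \cite{ACSW}, which is a different argument.

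The paper's mechanism is a time-partition trick: subdivide $[0,T]$ into $[T_{k-1},T_k]$, $k=1,\dots,n$, and on each subinterval apply Theorem~\ref{T62} with a tolerance $\varepsilon_k$ to obtain a wrinkled piece $y_w^k$; concatenate these to form $y_w^\ast$. Infinitely many solutions are then obtained by varying $\varepsilon_k$ and $T_k$ for $k\ge 2$ while \emph{keeping the first piece $y_w^1$ on $[0,T_1]$ fixed}. This freezes $y_w^\ast|_{t=0}=y_w^1|_{t=0}$ across the entire family, so the ``fixed initial data'' requirement is satisfied by construction. In short, the non-uniqueness is generated in the time variable, not by branching the spatial convex integration; that is the idea your proposal is missing.
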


The proof is identical to the one given in Theorem 7.3 in \cite{ACSW}.
Nevertheless, we provide a short sketch of the proof for the sake of completeness.

\begin{proof}
Choose an interval $[0,T]$ for which
the continuum mechanical balance laws have a smooth solution and a smooth embedding into $\R^6$ exists
via Theorem \ref{GeoMotion}.
Now take a sequence $\e_k>0$, $k=1, 2, \cdots, n$, and
$[T_0, T_1]$,  $[T_1, T_2]$,  $\cdots$,  $[T_{n-1}, T_n]$ with $T_0=0$ and $T_n=T$.
By Theorem \ref{T62}, there exists a sequence of wrinkled solutions $\{y_w^k\}$ so that
we can then define the wrinkled solution on the entire interval $[0,T]$ by
$$
y_w^\ast=y_w^k  \qquad \mbox{for $T_{k-1}\le t\le T_k, \, k=1, 2, \cdots, n$}.
$$
In addition, $y_w^*$ is continuous in $t$ with values in $C^1_{\rm loc}$
on each subinterval $[T_{k-1}, T_k]$.

Since $\|y_g-y_w^k\|_C\le\e_k$ on each subinterval $[T_{k-1}, T_k]$,
we can provide an infinite number of wrinkled solutions on $[0,T]$
by simply letting $\e_k$ and $T_k$ vary for $k=2, \cdots, n$, with $y_w^{k-1}$
as the fixed initial data.
The energy
$$
E(t)=\int_\O\partial_i y_w\cdot\partial_i y_wdx=\int_\O g_{ii}^\ast dx=const.
$$
on any domain $\O$ in $t>0$.
\end{proof}

Again as in \cite{ACSW},
it is not apparent in what sense $y_w$
actually satisfies
the Euler equations.
On the other hand, the simplicity of our arguments gives a rather elementary
explanation for both the existence of wild solutions to the equations of inviscid continuum
mechanics and the non-uniqueness of solutions to the Cauchy problem.

\medskip
\section{The Navier-Stokes equations and Couette flow}

In this section, we will apply our Nash-Kuiper approach to the classical Navier-Stokes equations
of viscous, incompressible fluid flow:
$$
T_{i j} = -p \delta_{ij} +2 \gamma  D_{ij},\quad  D_{i j} =\frac{1}{2}( \partial_{i}u_{j} + \partial_{j}u_{i}),
\quad
\gamma  >0, \quad {\rm div}\, u =0,
$$
which in turn imply
$$
\Delta p = -\div (\div (u \otimes u)).
$$

In particular, we will consider the problem of planar Couette flow between two parallel plates placed
at $x_{3} = \pm 1$ with the top (bottom) plate moving with speed $V$ (respectively $-V$).
We impose the non-slip boundary conditions:
$$
u_{1} = \pm V, \quad u_{2} =u_{3} =0 \qquad \mbox{at  $x_{3} = \pm 1$}.
$$
Define $\Omega $ to be the spatial domain:
$$
\Omega:=\{ (x_{1}, x_{2})\in\R^2 \;:\;  -1 \leq x_{3}\leq 1\}.
$$
We have taken~ dimensionless independent and dependent variables, and hence the quantity~$\gamma $
is now the inverse of the dimensionless Reynolds number.

An exact solution to the Navier-Stokes equations is given by the laminar Couette flow:
$$
u_{1} =Vx_{3}, \quad u_{2} =u_{3} =0, \quad p =const.
$$
and we take the constant to be zero so that $p =0$.
At first glance, there appears to be no external force for Couette flow, but of course this is not the case,
as an external force would be required to move the parallel plates and hence energy is being added to the system.
This fact is reflected in the fact that the $L^{2}\left (\Omega \right )$ energy norm of the laminar flow is infinite.
As in \S \ref{shearing}, matrix $\widehat{R}$ is singular, yet the laminar Couette flow can be identified
with a three-dimensional Riemannian manifold embedded in $\R^{6}$.

A crucial feature of planar Couette flow and related problems of Pouseille flow (pressure driven flow between
parallel plates) and rotating Couette flow (the fluid confined between two rotating cylinders)
is the numerically and physically observed bifurcation from laminar flow to roll patterns at high Reynolds number;
{\it cf.} \cite{CBusse, Nagata, Wal} and the references cited therein.
This means that the boundary value problem described above is expected to have non-unique solutions.
Hence, just as the simple shear solution for the Euler equations given in \S \ref{shearing}  provides an infinite number
of possible steady solutions to the Euler equations,
the boundary value problem for planar Couette flow can be expected to provide
a multitude of steady solutions at high Reynolds number.

By interchanging components for the embedding given in \S \ref{shearing},
the laminar Couette flow is embedded
into $\R^6$ with
$$
y_{1} =Bx_{3},\;\; y_{2} =f(x_{3}),\;\; y_{3} =Bx_{2},\;\; y_{4} =Bx_{1}, \;\; y_{5} =y_{6} =0,
$$
with
$$
\partial_{1}y =(0,0,0,B,0,0), \;\;  \partial_{2}y =(0,0,B,0,0,0), \;\;
\partial_{3}y =(B, f', 0, 0, 0, 0).
$$
This yields metric $g^{\ast}$ with components:
$$
g_{11}^{\ast} =g^\ast_{22}=B^2, \;\; g_{33}^{\ast} =B^2 +(f')^2, \;\;
g_{12}^{\ast} =g^\ast_{13} =g_{23}^{\ast} =0,
$$
which is Riemann flat.
Since we are using laminar Couette flow,
we have
$$
f'=B\arctan (-\frac{BV^2}3 x_3^3).
$$

Now we consider the initial value problem for the Couette flow.
Consider the initial data for $u$ with $\div\, u=0$ which satisfies the boundary conditions:
$$
u_{1} = \pm V,\quad  u_{2} =u_{3} =0  \qquad\; \mbox{at  $x_{3} = \pm 1$},
$$
and for $p$ initially as a solution
of the boundary value problem:
$$
\Delta p =-\div(\div (u \otimes u))
$$
to be positive on  $\Omega$.
This is easily done by adding a sufficiently large positive constant to any solution $p$ of
a fixed solution to the boundary value problem for $p$ in the bounded domain $\Omega$.
Hence, for sufficiently large Reynolds numbers, {\it i.e.},  small $\gamma $,
the contributions of the viscous stresses
$$
T_{i j} =2 \gamma  D_{ij}, \quad  D_{i j} =\frac{1}{2}( \partial _{i}u_{j} + \partial_{j}u_{i}), \quad \gamma  >0,
$$
to the computation of matrix
$\widehat{R}$ become negligible, and $\widehat{R}$ is initially positive definite.
In fact, this condition can be computed explicitly as follows:

The eigenvalues of matrix $\widehat{R}$ for the initial data $(u_{1}(x_{3}),0,0)$ and $p =p_{0}$ (constant) are given by
$$
\lambda_{1,2} =\frac{1}{2}\big(u_{1}^{2} +2p_{0} \pm \sqrt{u_{1}^{4} +4\gamma ^{2}(u_{1}^{\prime})^2}\big),
\quad \lambda_{3} =p_{0}.
$$
Hence, $p_{0}$ satisfying that $p_{0}^{2} +p_{0}u_{1}^{2}>\gamma^{2}(u_{1}^{\prime})^2$ would suffice,
so that  $\lambda_i>0$, $i=1,2,3$.
Here constant $p_{0}$ can be interpreted as the pressure at the ends of the channel.
Since the initial value problem for the Navier-Stokes equations has local-in-time smooth solutions,
we have a smooth solution with $\widehat{R}$ positive definite, locally in time.

We can now follow again the argument in \S \ref{NK-wild} to see that the geometric image
(at least locally in space-time) of the incompressible Navier-Stokes flow is approximated in $C^{0}$
by non-smooth $y_w$
and, furthermore, the initial value problem for $y_w$
has an infinite number of solutions. We state this as

\begin{theorem}
Theorems {\rm \ref{T62}}--{\rm \ref{T63}} are valid where

\begin{enumerate}
\item[\rm (a)] $y_{g}$ is the geometric image {\rm (}locally in space-time, say a domain $x\in \Omega_{1} \subset \Omega, \ 0 \leq t <T${\rm )}
of solutions of the initial value problem for the incompressible Navier-Stokes equations with the boundary
conditions:
$$
u_{1} = \pm V, \quad  u_{2} =u_{3} =0 \qquad \,\, \mbox{at  $x_{3} = \pm 1$};
$$

\smallskip
\item[\rm (b)]
$y_w\in  C^{1}(\Omega _{1})$  for each $t\in (0,T)$
and $y_w\in L^\infty((0,T); C^1_{\rm loc}(\Omega_{1}))$
satisfies $\partial_iy_w\cdot\partial_jy_w=g_{ij}^\ast$,  where $g^{\ast}$ has the components:
$$
g_{11}^{\ast} =g^\ast_{22}=B^2, \quad g_{33}^{\ast} =B^2+ (f^{\prime})^2, \quad  g_{12}^{\ast} =g^ \ast_{13} =g_{23}^{\ast} =0,
$$
which is Riemann flat.
\end{enumerate}
\end{theorem}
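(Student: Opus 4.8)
The plan is to carry the argument of Sections~\ref{shearing}--\ref{NK-wild} over to the viscous problem essentially verbatim; the only genuinely new point is that the viscous stress $2\g D_{ij}$ does not spoil the positive-definiteness of $\widehat R$ once $\g$ is small (high Reynolds number), so that Theorem~\ref{GeoMotion} still applies.

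First I would produce a good smooth Navier--Stokes solution. Fix divergence-free initial velocity $u^{0}$ with $u_{1}=\pm V$, $u_{2}=u_{3}=0$ at $x_{3}=\pm1$, and take the initial pressure $p^{0}=\tilde p+p_{0}$, where $\tilde p$ solves the pressure Poisson equation $\Delta\tilde p=-\div(\div(u^{0}\otimes u^{0}))$ on $\O$ and the constant $p_{0}$ is chosen so large that the three leading principal minors of $\widehat R$ are strictly positive on $\overline{\O}$; for data near laminar Couette flow this is the explicit condition $p_{0}^{2}+p_{0}|u^{0}|^{2}>\g^{2}|Du^{0}|^{2}$ pointwise (eigenvalues $\l_{1,2}=\tfrac12(u_{1}^{2}+2p_{0}\pm\sqrt{u_{1}^{4}+4\g^{2}(u_{1}')^{2}})$, $\l_{3}=p_{0}$), which holds since $\O$ is bounded and, in particular, once $\g$ is small. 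Local-in-time well-posedness of the incompressible Navier--Stokes initial--boundary value problem then gives a $C^{\infty}$ solution $(\r\equiv1,u,T)$ on $[0,T)$, and continuity of $\widehat R$ in $(x,t)$ lets me shrink $T$ and pick a subdomain $\Omega_{1}\Subset\O$ around a base point $x^{0}$ on which $\widehat R$ stays positive definite, hence $\det\widehat R\neq0$.

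Next, on $\Omega_{1}\times[0,T)$ Theorem~\ref{GeoMotion} yields a $C^{\infty}$ space-time metric $g$ with $\textit{Riem}(g)=\widehat R$, $g_{ij}=\delta_{ij}$ at $x^{0}$, and (via Theorem~\ref{Nakamura-Maeda}, the nonvanishing of $Riem(g)$ being guaranteed by $\widehat R\neq0$) a $C^{\infty}$ isometric embedding $y_{g}$ of $(\Omega_{1},g(\cdot,t))$ into $\R^{6}$ for each $t$. Expanding $g_{ij}=\delta_{ij}+\text{h.o.t.}$ about $x^{0}$ and comparing with the flat Couette metric $g^{\ast}$, whose components are $g^{\ast}_{11}=g^{\ast}_{22}=B^{2}$, $g^{\ast}_{33}=B^{2}+(f')^{2}$ with $f'=B\arctan(-\tfrac{BV^{2}}{3}x_{3}^{3})$ and off-diagonal entries zero, I see that for $B$ large and $\Omega_{1}$ small one has $g\le g^{\ast}$ as quadratic forms, i.e. $y_{g}$ is \emph{short} relative to $g^{\ast}$. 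Case (ii) of Theorem~\ref{Nash-Kuiper} with $n=3$, $m=6$ then gives $C^{1}$ maps $y_{w}$ on $\Omega_{1}$ with $\partial_{i}y_{w}\cdot\partial_{j}y_{w}=g^{\ast}_{ij}$ and $\|y_{g}-y_{w}\|_{C^{0}}$ as small as desired; the energy $E(t)=\int_{\Omega_{1}}\partial_{i}y_{w}\cdot\partial_{i}y_{w}\,dx=\int_{\Omega_{1}}\mathrm{tr}(g^{\ast})\,dx$ is constant in $t$ since $g^{\ast}$ is $t$-independent, and the regularity $y_{w}\in C^{1}(\Omega_{1})$ for each $t$, $y_{w}\in L^{\infty}((0,T);C^{1}_{\rm loc}(\Omega_{1}))$ continuous in $t$ into $C^{1}_{\rm loc}$ is inherited exactly as in Theorem~\ref{T62}. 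The non-uniqueness assertion follows from the gluing in the proof of Theorem~\ref{T63}: partition $[0,T)$ into intervals $[T_{k-1},T_{k}]$, rerun the Nash--Kuiper step on each with the previous endpoint value $y_{w}^{k-1}$ as initial datum and tolerance $\e_{k}$, and let the $\e_{k}$ and $T_{k}$ vary to obtain infinitely many solutions of $\partial_{i}y\cdot\partial_{j}y=g^{\ast}_{ij}$ with common initial data, each of constant energy.

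The real obstacle is not an estimate but the intrinsic locality of the construction: DeTurck--Yang produces $g$ only near $x^{0}$ and Nakamura--Maeda produces $y_{g}$ only near $x^{0}$, so the conclusion is necessarily confined to a subdomain $\Omega_{1}\Subset\O$, and the channel boundary conditions $u_{1}=\pm V$ at $x_{3}=\pm1$ are not carried along by $y_{w}$ — they enter only through the choice of the underlying Navier--Stokes solution. A secondary point requiring care is that $\widehat R$ must stay non-singular for a definite time; this is precisely why the strict gap $p_{0}^{2}+p_{0}|u|^{2}-\g^{2}|Du|^{2}>0$ is imposed at $t=0$ (an open condition, preserved for short time by smoothness of the flow), and it is the smallness of $\g$ that keeps $2\g D_{ij}$ a negligible perturbation of the inviscid computation $\det\widehat R=p^{2}(p+\r q^{2})$.
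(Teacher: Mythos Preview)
Your proposal is correct and follows essentially the same route as the paper: the paper does not supply a formal proof environment for this theorem but instead establishes it through the discussion preceding the statement (choice of initial data with $p_{0}$ large so that $\widehat R$ is positive definite via the eigenvalue computation, local-in-time smooth Navier--Stokes existence, then ``follow again the argument in \S\ref{NK-wild}''), and you have reproduced exactly that chain --- Theorem~\ref{GeoMotion} for $g$ and $y_{g}$, the shortness comparison $g\le g^{\ast}$ for large $B$, Nash--Kuiper (ii), and the gluing of Theorem~\ref{T63}. Your closing remarks on the intrinsic locality of DeTurck--Yang and Nakamura--Maeda, and on the boundary data entering only through the underlying Navier--Stokes solution, are apt and in fact sharpen what the paper leaves implicit.
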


As noted above, if there was only one solution to the steady Couette problem,
the result of such non-uniqueness would seem unlikely,
but
the key here is that there are many such solutions at large Reynolds number
and the non-uniqueness of the evolutionary geometric problem is not unexpected.

Of course, these results give  an indication at the geometric level for both the~ onset of turbulence
for high Reynolds number Navier-Stokes flow and non-uniqueness of~ weak solutions for the Cauchy  problem
for the three-dimensional Navier-Stokes equations.
Specifically, we note that, to apply the above theory, we have needed the three eigenvalues  at the ends of
the channel fixed and positive, and the transition from the wild ``turbulent'' initial data
will occur as $\lambda_{2}$ passes from positive to negative values for decreasing
Reynolds number $\gamma^{-1}$.
Thus, according to our geometric theory, the critical Reynolds number will be given by the formula:
$$
 p_{0}^{2} +p_{0}u_{1}^{2} =\gamma_{\rm crit}^{2}(u_{1}^{\prime})^2.
$$
In addition, this formula provides an equation for the critical profile for transition from turbulence.
One possible check of this critical profile relation is to use the experimental data
of Reichardt \cite{Reichardt} where the data are normalized so that $V =1$.

Figure (a) (from \cite{Reichardt}) gives profiles for water with Reynolds number $Rw=18000$ and oil with Reynolds number $Ro=2900$.
We consider the one for oil since the Reynolds number for that experiment is closer to the usually accepted Reynolds number
  $\sim 2300$ for transition to turbulence.
Since we expect that $p_{0} \ll 1$, we drop that term in our critical profile equation and simplify it
as
$$
p_{0}u_{1}^{2} =\gamma_{\rm crit}^{2}(u_{1}^{\prime})^2.
$$
Let us work on the interval $0 \leq x_{3} \leq 1$.
We can find the solution for $ -1 \leq x_{3} \leq 0$ via the relation,
  $u_{1}( -x_{3}) = -u(x_{3})$.
Hence, for  $0 \leq x_{3} \leq 1$,  we solve the initial value problem:
$$
u_{1}^{ \prime}=au_1, \qquad     u_{1}(1) =1,
$$
for $a =\frac{\sqrt{p_{0}}}{{\gamma}_{crit}}$
to give
$$
u_{1}(x_{3})=\exp (a(x_{3} -1)).
$$
If we fit this relation to Reichardt's Figure (a)  by using $u_{1}(.8)=.4$,
we find $p_{0}=(.00159)^{2}$, $a =(.00159..)(2900) =4.611$,
and so the critical profile is approximately given by $u_{1}(x_{3})=\exp (4.611(x_{3} -1))$.

We note the value of our approximate critical profile at $x_{3} =0$ is $u_{1}(0 +)=\exp(-4.611)$.
While this value is not identically zero, its value is sufficiently small to provide  a very good approximation
to the Reichardt's graph which has the value zero at  $x_{3} =0$.
We plot our approximate critical profile in Figure (b)
with the $x$--axis corresponding to $x_{3}$ and the $y$--axis corresponding
to $e^{4.611(x -1)}$.

\begin{center}
\includegraphics[height=1.8in]{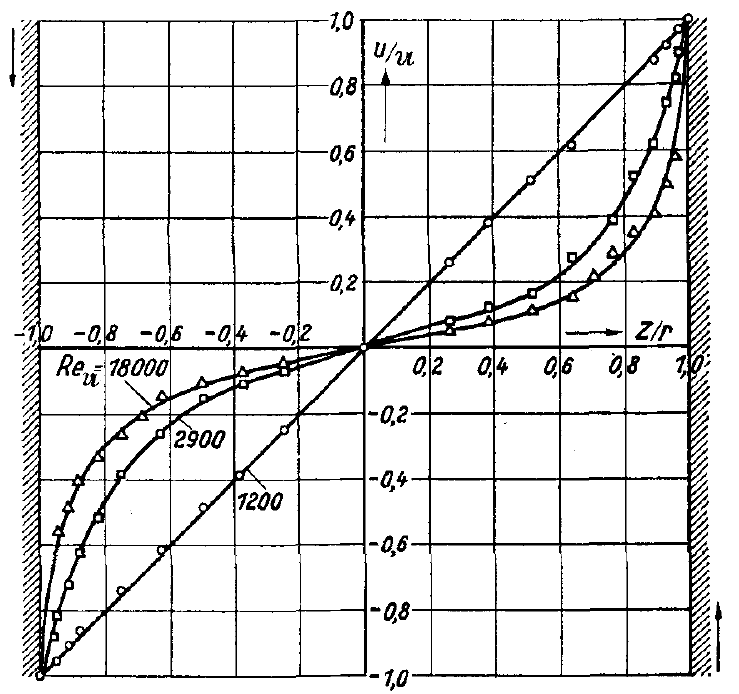}\qquad
\includegraphics[height=1.8in]{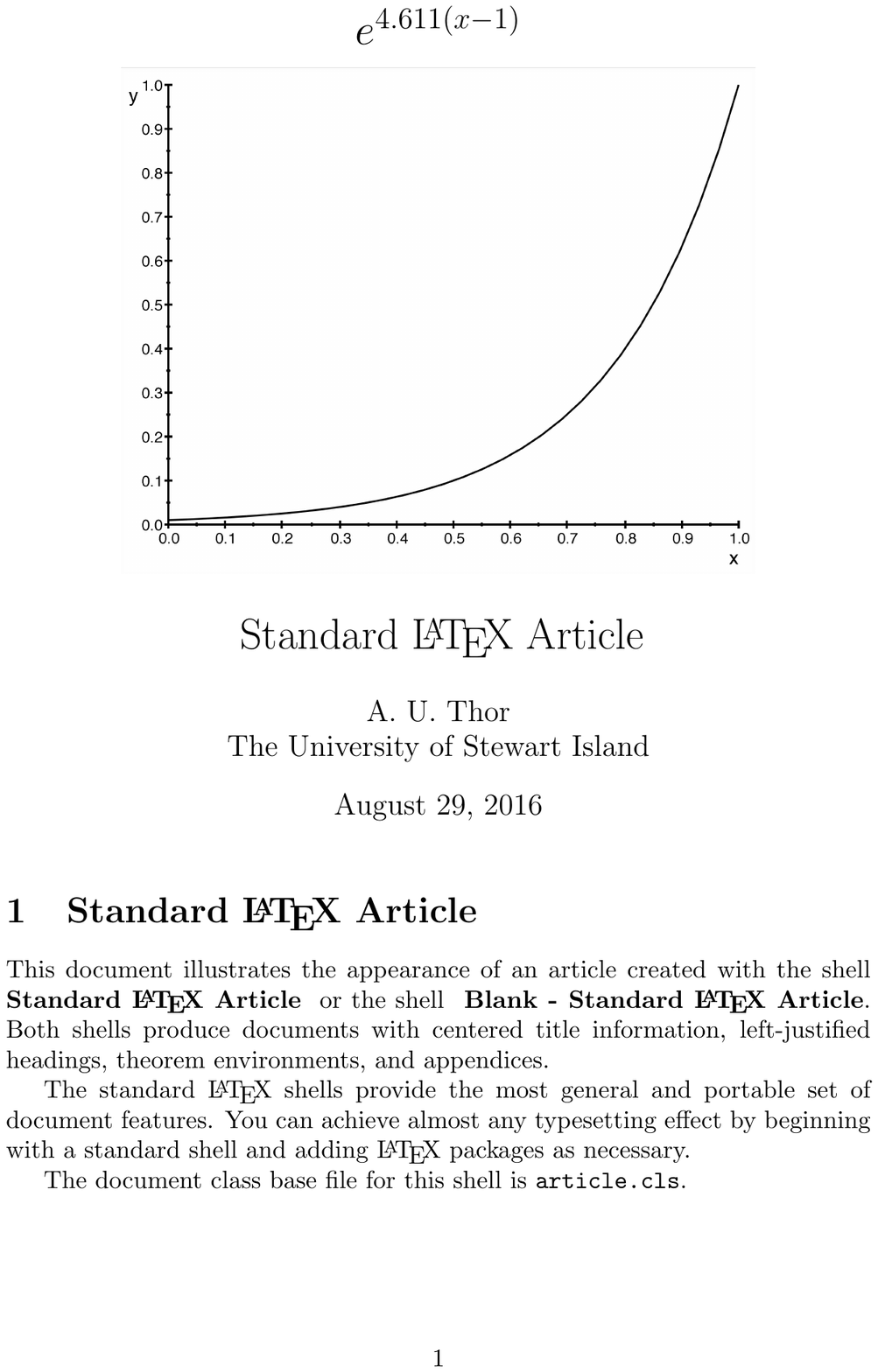}

(a) \hskip 5cm (b)
\end{center}

While the above computation does not validate the geometric theory,
it does at least show the geometric theory is consistent with experiment.
We note that the experiments of Cadot et al \cite{Cadot} have also demonstrated
the occurrence of low pressure turbulence.

\bigskip

\section*{Acknowledgments}
G.-Q. Chen's research was supported in part by
the UK
Engineering and Physical Sciences Research Council under Grants
EP/E035027/1 and EP/L015811/1,
and the Royal Society Wolfson Research Merit Award (UK).
M. Slemrod was supported in part by Simons Collaborative Research Grant 232531.
D. Wang was supported in part by NSF grants DMS-1312800 and DMS-1613213.
We also wish to thank Amit Acharya, Siran Li, and Laszlo Sz\'{e}kelyhidi for their valuable
remarks on this research project.

\bigskip

\end{document}